\newtheorem{definition}{Definition}
\newtheorem{lemma}{Lemma}
\newtheorem{theorem}{Theorem}
\newtheorem{corollary}{Corollary}
\theoremstyle{definition}
\newtheorem{remark}{Remark}
\DeclareMathOperator{\HyperF}{F}
\newcommand{\Hypergeom}[5]{{\sideset{_#1}{_#2}\HyperF\!\left(
      \genfrac{}{}{0pt}{0}{#3}{#4}
      \middle|\,#5\right)}}
\DeclareMathOperator{\csing}{\mathrm{C}_{\mathrm{sing}}}
\DeclareMathOperator{\cont}{\mathrm{C}}
\newcommand{\eps}{\varepsilon}
\renewcommand{\theta}{\vartheta}
\newcommand{\Laplace}{\triangle}
\begin{document}
\title[Positive definite singular kernels\ldots]%
{Positive definite singular kernels on two-point homogeneous spaces}

\author[Dmitriy Bilyk]{Dmitriy Bilyk\textsuperscript{*}}
\thanks{\textsuperscript{*} This author was supported by the National Science
  Foundation grant DMS 2054606 and is also grateful to Fulbright Austria and
  NAWI Graz.} 
\email{dbilyk@umn.edu}
\address{School of Mathematics,
  University of Minnesota,
  Minneapolis, MN 55455, USA}

\author[Peter J. Grabner]{Peter J. Grabner\textsuperscript{\ddag}}
\thanks{\textsuperscript{\ddag} This author is supported by the Austrian
  Science Fund FWF project I~6750}
\email{peter.grabner@tugraz.at}
\address{Institute of Analysis and Number Theory,
  Graz University of Technology,
  Kopernikusgasse 24.
8010 Graz,
Austria}
\dedicatory{Dedicated to Edward B. Saff on the occasion of his
  80\textsuperscript{th} birthday}
\maketitle

\begin{abstract}
  We study positive definiteness of kernels $K(x,y)$ on two-point homogeneous
  spaces. As opposed to the classical case, which has been developed and studied
  in the existing literature, we allow the kernel to have an (integrable)
  singularity for $x=y$. Specifically, the Riesz kernel $d(x,y)^{-s}$ (where
  $d$ denotes some distance on the space) is a prominent example. We derive
  results analogous to Schoenberg's characterization of positive definite
  functions on the sphere, Schur's lemma on the positive definiteness of the
  product of positive definite functions, and Schoenberg's characterization of
  functions positive definite on all spheres. We use these results to better
  understand the behavior of the Riesz kernels for the geodesic and chordal
  distances on projective spaces.
\end{abstract}

\section{Introduction}\label{sec:introduction}
In the present paper we explore the connection between positive definiteness,
energy minimization, and positivity of coefficients in the Jacobi expansions
for singular kernels on two-point homogeneous spaces. While vast literature on
this topic exists in the case of continuous or bounded kernels
\cite{Schoenberg1942:positive_definite_functions,
  Borodachov_Hardin_Saff2019:discrete_energy_rectifiable}, the general case of
singular kernels is much less studied despite the ubiquity of such kernels --
the most common example are the Riesz kernels with respect to various metrics,
which arise naturally in harmonic analysis, potential theory, mathematical
physics etc. Specific singular Riesz kernels of this type have been
investigated in this context in numerous papers
\cite{Anderson_Dostert_Grabner+2023:riesz_green_energy,
  Damelin_Grabner2003:energy_functional_numeric,
  Bilyk_Dai2019:geodesic_distance_riesz}, but all of the prior
results relied on ad hoc approximation tricks and a universal theory was still
lacking (an attempt at a general study of energy minimization for singular
kernels has been undertaken in
\cite{Bilyk_Matzke_Nathe2024:geodesic_distance_riesz}). The results presented
in the current paper are stronger and broader. We shall demonstrate that most
results about continuous positive definite kernels propagate to singular
kernels under minimal integrability (and continuity away from the singularity)
conditions and without any structural assumptions about the kernel.

In the following we will restrict ourselves to compact two-point homogeneous
spaces. These are metric spaces $(X,d)$ with an isometry group $G$ acting on
them such that for pairs of points $(x_1,y_1)$ and $(x_2,y_2)$ with
$d(x_1,y_1)=d(x_2,y_2)$ there exists a $g\in G$ such that $gx_1=x_2$ and
$gy_1=y_2$. The two-point homogeneous compact manifolds have been classified
(see \cite[Chapter I.4]{Helgason2000:groups_geometric_analysis}) to be the
projective spaces
$\mathbb{RP}^{d-1},\mathbb{CP}^{d-1},\mathbb{HP}^{d-1},\mathbb{OP}^2$, and the
spheres $\mathbb{S}^{d-1}$; in general, we write $\mathbb{FP}^{d-1}$ for the
projective spaces over the ground ``field'' $\mathbb{F}$. We collect all the
necessary facts about the harmonic analysis on these spaces in Section
\ref{sec:harm-analys-two}.

While most of our results are rather general, we shall pay special attention to
Riesz kernels with respect to two special distances on these spaces: the
chordal distance (which is an analogue of the Euclidean distance on the sphere)
and the geodesic distance intrinsic to the space. Even though powers of these
distances are equivalent at small scales, they behave quite differently with
respect to positive definiteness and energy minimization, which has already
been observed in \cite{Gangolli1967:positive_definite_kernels}. This difference
is particularly striking in the case of the projective spaces, and we further
investigate and explain this curious phenomenon.

The outline of the paper is as follows. In Section~\ref{sec:harm-analys-two} we
present the necessary background on the harmonic analysis on two-point
homogeneous spaces (zonal kernels, Jacobi polynomials, Poisson kernels).
Section~\ref{sec:pos-def} explores the positive definiteness property in the
case of singular kernels. In particular, in Theorems~\ref{thm:posdef}
and~\ref{thm:strict} we extend the classical results that go back to Schoenberg
\cite{Schoenberg1942:positive_definite_functions} (for projective spaces, see
e.g. \cite{Gangolli1967:positive_definite_kernels}) to the case of singular
zonal kernels, i.e. we show that a singular zonal kernel is (strictly) positive
definite if and only if all of its Jacobi coefficients are non-negative
(positive) when only integrability with respect to the uniform (i.e. isometry
invariant) measure and continuity away from the singularity are assumed. In
Section~\ref{sec:schurs-lemma} we demonstrate that the classical Schur's lemma
generalizes to singular kernels: the product of positive definite kernels is
again positive definite (as long as this product is integrable). In
Section~\ref{sec:functions-for-all} we extend Schoenberg's
\cite{Schoenberg1942:positive_definite_functions} results on functions, which
are positive definite on spheres of all dimensions to the case of projective
spaces. For continuous functions such results have been obtained by
\cite{Guella_Menegatto2018:limit_formula_jacobi}, we extend these to the case
of singular integrable kernels. As a byproduct we obtain that no geodesic Riesz
kernel can be positive definite on all projective spaces.


\section{Harmonic analysis on two-point homogeneous
  spaces}\label{sec:harm-analys-two}
In this section we collect some basic facts about the harmonic analysis on the
spheres $\mathbb{S}^{d-1}$ and the projective spaces $\mathbb{FP}^{d-1}$. It is
convenient to set $\frac{\pi}{2\kappa}$ as the geodesic diameter of these
spaces, where $\kappa=\frac12$ for the spheres and $\kappa=1$ for the
projective spaces.  We denote the geodesic distance by $\theta(x,y)$.
Furthermore, we set
\begin{equation}
  \label{eq:alpha-beta}
  \alpha=\frac{d-1}2\dim_{\mathbb{R}}(\mathbb{F})-1\quad\text{and }
  \beta=
  \begin{cases}
    \alpha&\text{for }X=\mathbb{S}^{d-1}\\
    \frac{\dim_{\mathbb{R}}(\mathbb{F})}2-1&\text{for }X=\mathbb{FP}^{d-1}.
  \end{cases}
\end{equation}
Throughout this paper we will denote the space associated to parameters
$\alpha$ and $\beta$ by $X_{\alpha,\beta}$ (but will sometimes drop the indices
if it does not cause confusion). Furthermore, we denote by
$D=2\alpha+2=(d-1)\dim_{\mathbb{R}}$ the (real) dimension of the space.

It then follows that the surface area of the geodesic sphere of radius
$r\in[0,\frac{\pi}{2\kappa}]$ is given by (see
\cite[Proposition~5.6]{Helgason1965:Radon_Trans_Two_Pt_Homo})
\begin{equation}
  \label{eq:geo-sphere}
  A(r)=C_{\alpha,\beta}\kappa\sin(\kappa r)^{2\alpha+1}\cos(\kappa r)^{2\beta+1},
\end{equation}
where the constant is chosen as
\begin{equation*}
 C_{\alpha,\beta}=\frac{2\Gamma(\alpha+\beta+2)}{\Gamma(\alpha+1)\Gamma(\beta+1)}
\end{equation*}
to normalize the volume of the space
\begin{equation*}
  \int_0^{\frac\pi{2\kappa}}A(r)\,dr .
\end{equation*}
The Laplace operator for functions only depending on the distance to one fixed
point (``zonal functions'') is given by (see
\cite[Chapter~X.7.4]{Helgason1978:Diff_Geo_Lie_Group_Sym_Spaces})
\begin{equation}
  \label{eq:laplace}
  \Laplace f=-\frac1{A(r)}\frac{\partial}{\partial r}
  \left(\frac{\partial}{\partial r}A(r)f\right).
\end{equation}
This implies that the zonal eigenfunctions of the Laplace operator are given by
\begin{equation}
  \label{eq:laplace-eigen}
  P_n^{(\alpha,\beta)}(\cos(2\kappa r)),
\end{equation}
where $P_n^{(\alpha,\beta)}$ denotes the Jacobi polynomial. The corresponding
eigenvalues are then
\begin{equation*}
  \lambda_n=4\kappa^2n(n+\alpha+\beta+1),
\end{equation*}
see \cite[Theorem~4.2.2]{Szegoe1975:orthogonal_polynomials}.

Let $\sigma_{\alpha,\beta}$ denote the normalized surface measure on
$X_{\alpha,\beta}$, which is induced by the Haar measure on the group $G$. Then
the equality
\begin{align}
  \label{eq:zonal-int}
  \int_{X_{\alpha,\beta}} F(\cos(2\kappa\theta(x,a)))\,d\sigma_{\alpha,\beta}(x)& =
  \int_0^{\frac\pi{2\kappa}}F(\cos(2\kappa\theta))\,d\nu_{\alpha,\beta}(\theta)\\
  \nonumber & = \int_{-1}^1 F(t) \, d\mu_{\alpha,\beta} (t),
\end{align}
where
\begin{align*}
  d\nu_{\alpha,\beta}(\theta)&=C_{\alpha,\beta}\kappa\sin(\kappa\theta)^{2\alpha+1}
  \cos(\kappa\theta)^{2\beta+1}\,d\theta,\\
  d\mu_{\alpha,\beta}(\theta)&=C_{\alpha,\beta} (1-t)^\alpha (1+t)^\beta dt,
\end{align*}
holds for every function $F:[-1,1]\to\mathbb{R}$, which is integrable with
respect to $d\mu_{\alpha,\beta}$, or equivalently,  $F(\cos2\kappa\theta ) \in L^1 (\nu_{\alpha,\beta})$.

Since the Jacobi polynomials are orthogonal with respect to
$d\mu_{\alpha,\beta}$, every function $F\in L^2(\mu_{\alpha,\beta})$ can be
represented as a ``Fourier series''
\begin{equation}\label{eq:Fourier}
  F(t)=
  \sum_{n=0}^\infty\widehat{F}(n)P_n^{(\alpha,\beta)}(t),
\end{equation}
where
\begin{equation}\label{eq:defcoef}
 \widehat{F}(n)=\frac{m_n^{(\alpha,\beta)}}{\left(P_n^{(\alpha,\beta)}(1)\right)^2}
  \int_0^{\frac\pi{2\kappa}}F(\cos(2\kappa\theta))
  P_n^{(\alpha,\beta)}(\cos(2\kappa\theta))
  \,d\nu_{\alpha,\beta}(\theta)
\end{equation}
and
\begin{equation}\label{eq:mn}
  m_n^{(\alpha,\beta)}=\frac{2n+\alpha+\beta+1}{\alpha+\beta+1}
  \frac{(\alpha+\beta+1)_n(\alpha+1)_n}{n!(\beta+1)_n}.
\end{equation}

Equality \eqref{eq:Fourier} is \emph{a priori} only valid in the
$L^2$-sense. For instance, if $F$ is continuous and all coefficients
$\widehat{F}(n)$ are non-negative, then the series converges absolutely and
uniformly (which can be viewed as a consequence of Mercer's theorem, see
\cite{Mercer1909:functions_positive_negative}).

For the values $\alpha$ and $\beta$ occurring in our context, the
numbers $m_n^{(\alpha,\beta)}$ are integers and equal the dimension of the space of
eigenfunctions of the Laplace operator \eqref{eq:laplace} for the eigenvalue
$\lambda_n$.

We define the following function on $[-1,1]$ for $0\leq r\leq1$
\begin{equation}\label{eq:poisson}
  \begin{split}
    \mathcal{P}_r(\cos2\kappa\theta)&= \sum_{n=0}^\infty
    \frac{2n+\alpha+\beta+1}{\alpha+\beta+1}
    \frac{(\alpha+\beta+1)_n}{(\beta+1)_n}
    P_n^{(\alpha,\beta)}(\cos2\kappa\theta)r^n\\
    &=    \frac{1-r}{(1+r)^{\alpha+\beta+2}}
    \Hypergeom21{\frac{\alpha+\beta+2}2,\frac{\alpha+\beta+3}2}{\beta+1}%
    {\frac{4r\cos^2\kappa\theta}{(1+r)^2}},
  \end{split}
\end{equation}
which we will call the \emph{Poisson kernel}. Here and throughout $_2F_1$
denotes the classical hypergeometric function (see, for instance
\cite{Andrews_Askey_Roy1999:special_functions}). For the special case
$X=\mathbb{S}^{d-1}$ this is the classical Poisson kernel, which solves the
Poisson equation for the Euclidean ball.

Equation~\eqref{eq:poisson} can be derived from the generating function (see
\cite[(6.4.7)]{Andrews_Askey_Roy1999:special_functions})
\begin{multline*}
  \sum_{n=0}^\infty \frac{(\alpha+\beta+1)_n}{(\beta+1)_n}
  P_n^{(\alpha,\beta)}(\cos2\kappa\theta)r^n\\=
  \frac1{(1+r)^{\alpha+\beta+2}}
  \Hypergeom21{\frac{\alpha+\beta+1}2,\frac{\alpha+\beta+2}2}{\beta+1}%
  {\frac{4r\cos^2\kappa\theta}{(1+r)^2}}
\end{multline*}
by differentiating with respect to $r$ and applying contiguous relations for
the hypergeometric function.

We use the Poisson kernel to define the associated convolution operator acting
on continuous functions, for
which we use the same notation
\begin{align*}
  \mathcal{P}_r(f)(x)=
  \int_{X_{\alpha,\beta}}\mathcal{P}_r(\cos2\kappa\theta(x,y))f(y)
  \,d\sigma_{\alpha,\beta}(y).
\end{align*}
Furthermore, for $f\in\mathrm{C}(X_{\alpha,\beta})$ the limit relation
\begin{equation}\label{eq:limit}
  \lim_{r\to1-}\mathcal{P}_r(f)(x)=f(x)
\end{equation}
holds uniformly for $x\in X_{\alpha,\beta}$. The proof for this is completely
analogous to the proof that Fourier series of continuous functions converge in
the sense of Abel summation (see, for instance
\cite{Zygmund2002:trigonometric_series}).

If $F\in L^1(\mu_{\alpha,\beta})\cap\mathrm{C}([-1,1))$, then
\begin{equation}\label{eq:poissonF}
  \mathcal{P}_r(F)(\cos2\kappa\theta(x,z))=
  \sum_{n=0}^\infty \widehat{F}(n)r^nP_n^{\alpha,\beta}(\cos2\kappa\theta(x,z)).
\end{equation}
Here the operator is applied to the zonal function
$F(\cos2\kappa\theta(x,\cdot))$. In this case \eqref{eq:limit} still holds
uniformly on $\cos2\kappa\theta(x,y)\in[-1,1-\epsilon]$ for $\epsilon>0$.


\section{Positive definite functions}\label{sec:pos-def}
Positive definite kernels play an important role in potential theory, harmonic
analysis, and operator theory on metric spaces. Unfortunately, for singular
kernels the general theory is scarce. Even the standard definition does not
apply in this case, although it is not hard to properly adjust it. Below we
define positive definiteness for singular zonal kernels on the two-point
homogeneous spaces $ X_{\alpha,\beta}$, but the extension to the general case
is straightforward.

\begin{definition}
  A measurable function $F:[-1,1]\to\mathbb{R}$ is called
  \emph{positive definite} on $ X_{\alpha,\beta}$, if for all signed Borel measures $\mu$, for which
  $F(\cos2\kappa\theta(\cdot,y))\in L^1(|\mu|)$ for all $y\in
  X_{\alpha,\beta}$, the inequality 
  \begin{equation}\label{eq:pos-def}
    E_F(\mu)=\iint\limits_{X_{\alpha,\beta}\times X_{\alpha,\beta}}
    F(\cos2\kappa\theta(x,y))\,d\mu(x)\,d\mu(y)\geq0
  \end{equation}
  holds. If there is strict inequality in \eqref{eq:pos-def} for all
  $\mu\neq0$, the kernel is called \emph{strictly positive definite}. A
  kernel $K$ is called \emph{conditionally positive definite}, if
  \eqref{eq:pos-def} holds for all signed measures of zero total mass
  \textup{(}$\mu(X)=0$\textup{)}.
\end{definition}
\begin{remark}\label{rem:schur}
  This definition is an immediate generalisation of the definition given by
  Bochner in \cite{Bochner1941:hilbert_distances_positive}. This classical
  definition requires the matrix $(F(\cos2\kappa\theta(x_i,x_j)))_{i,j}$ to be
  positive semidefinite for all choices of $x_i\in X_{\alpha,\beta}$
  ($i=1,\ldots,N$). It is obvious that this is equivalent to the fact that
  \eqref{eq:pos-def} holds for all finitely supported measures, and weak-$*$
  density of such measures implies the equivalence of the two definitions for
  continuous kernels $F \in C[-1,1]$.
  
  The original definition forces the kernel $F$ to be defined for all pairs
  $x$, $y$ in the domain.
A very prominent example, namely the Riesz kernel
$\|x-y\|^{-s}$ defined for
$X\subset\mathbb{R}^d$, does not satisfy this requirement when
$x=y$. The definition above avoids this issue and applies to singular
kernels. We mention that for the case of $\mathbb{R}^d$ singular positive
kernels have been studied in
\cite{Phillips_Schmidt_Zhigljavsky2019:extension_schoenberg_theorem}.
\end{remark}

We now demonstrate that, just like in the continuous case
\cite{Barbosa_Menegatto2016:strictly_positive_definite}, positive definiteness
on $X_{\alpha,\beta}$ amounts to non-negativity of the coefficients in the Jacobi
expansion.

\begin{theorem}\label{thm:posdef}
  Let $F:[-1,1)\to\mathbb{R}$ be a function in
  $L^1(\mu_{\alpha,\beta}) \cap \cont([-1,1))$. Then $F$ is positive definite if
  and only if all of  its Fourier coefficients are non-negative.
\end{theorem}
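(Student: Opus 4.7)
The plan is to reduce both directions of the equivalence to the classical continuous case via the Poisson regularisation $F_r := \mathcal{P}_r(F)$, whose Jacobi coefficients satisfy $\widehat{F_r}(n) = \widehat{F}(n) r^n$ by \eqref{eq:poissonF}. For the ``only if'' direction, I would test positive definiteness against signed measures $d\mu = \phi\, d\sigma_{\alpha,\beta}$, where $\phi$ is a nonzero real eigenfunction of the Laplace operator \eqref{eq:laplace} for the eigenvalue $\lambda_n$. Admissibility is automatic because $\phi$ is bounded and $F \in L^1(\sigma_{\alpha,\beta})$ via \eqref{eq:zonal-int}. The key identity is the ``zonal projection formula''
\begin{equation*}
\int_{X_{\alpha,\beta}} F(\cos 2\kappa\theta(x,y)) \, \phi(x) \, d\sigma_{\alpha,\beta}(x) = \widehat{F}(n) \, \frac{P_n^{(\alpha,\beta)}(1)}{m_n^{(\alpha,\beta)}} \, \phi(y),
\end{equation*}
which is the addition formula for $P_n^{(\alpha,\beta)}(\cos 2\kappa\theta(x,y))$ applied termwise to the Jacobi expansion. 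For singular $F$ I would obtain it by first applying the identity to the continuous approximant $F_r$ (where uniform convergence of the Jacobi series makes the termwise step immediate) and then passing $r \to 1^-$ using $F_r \to F$ in $L^1(\sigma_{\alpha,\beta})$ together with $\phi \in L^\infty$. Multiplying by $\phi(y)\, d\sigma_{\alpha,\beta}(y)$ and integrating yields $E_F(\mu) = \widehat{F}(n) \frac{P_n^{(\alpha,\beta)}(1)}{m_n^{(\alpha,\beta)}} \|\phi\|_{L^2}^2 \geq 0$, forcing $\widehat{F}(n) \geq 0$ since the numerical factor is strictly positive.

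For the ``if'' direction, assume $\widehat{F}(n) \geq 0$ for all $n$. For each $0 \leq r < 1$, the Jacobi series \eqref{eq:poissonF} converges uniformly on $[-1, 1]$: $|\widehat{F}(n)|$ grows at most polynomially (from the orthogonality bound defining \eqref{eq:defcoef} and $|P_n^{(\alpha,\beta)}(t)| \leq P_n^{(\alpha,\beta)}(1)$) while $r^n$ decays geometrically, so $F_r$ extends to a continuous function on $[-1, 1]$. Since its Jacobi coefficients $\widehat{F}(n) r^n$ are non-negative, the classical continuous Schoenberg--Gangolli theorem yields $F_r$ positive definite, so $E_{F_r}(\mu) \geq 0$ for every admissible signed $\mu$. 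It remains to show $\lim_{r \to 1^-} E_{F_r}(\mu) = E_F(\mu)$, which will give $E_F(\mu) \geq 0$.

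By \eqref{eq:limit}--\eqref{eq:poissonF}, $F_r \to F$ uniformly on $[-1, 1-\delta]$ for every $\delta > 0$. My plan is to split $\iint F_r\, d\mu \otimes d\mu$ into the off-diagonal region $\{\theta(x,y) \geq \eta\}$, where uniform convergence handles the difference, and the near-diagonal region $\{\theta(x,y) < \eta\}$. For the near-diagonal part, I would exploit the pointwise domination $|F_r| \leq \mathcal{P}_r(|F|)$ coming from positivity of the Poisson kernel \eqref{eq:poisson}, and then use Fubini to reduce uniform-in-$r$ control of $\iint_{\theta < \eta}|F_r|\, d|\mu|\otimes d|\mu|$ to integrability of the $|F|$-potential
\begin{equation*}
U(y) := \int_{X_{\alpha,\beta}} |F|(\cos 2\kappa\theta(z,y))\, d|\mu|(z),
\end{equation*}
which is finite everywhere by the admissibility hypothesis.

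The principal obstacle is precisely this last step: obtaining uniform-in-$r$ control of $|F_r|$ in a neighbourhood of the singular diagonal, so that the limit interchanges with the double integral when $F$ is only $L^1$ there. Once this estimate is in place, letting first $\eta \to 0$ and then $r \to 1^-$ gives $E_F(\mu) = \lim_{r \to 1^-} E_{F_r}(\mu) \geq 0$, completing the sufficiency; everything else amounts to invoking the continuous Schoenberg--Gangolli theorem, the addition formula for Jacobi polynomials, and the standard approximate-identity property of the Poisson kernel.
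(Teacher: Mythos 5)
Your necessity argument is essentially the paper's: testing against eigenfunctions of the Laplacian for the eigenvalue $\lambda_n$ and invoking the Funk--Hecke/addition formula is the same computation the paper performs with the particular eigenfunctions $\phi=P_n^{(\alpha,\beta)}(\cos2\kappa\theta(\cdot,z))$, i.e.\ with the measures $d\mu_z=P_n^{(\alpha,\beta)}(\cos2\kappa\theta(\cdot,z))\,d\sigma_{\alpha,\beta}$, and your justification of the interchange (Fubini for $F\in L^1(\sigma_{\alpha,\beta})$ against a bounded $\phi$, or a limit from $F_r$) is sound. That half is complete.

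The sufficiency direction, however, has a genuine gap, and you have located it yourself: the passage $E_{F_r}(\mu)\to E_F(\mu)$. The route you propose --- dominate $|F_r|\le\mathcal{P}_r(|F|)$ and reduce the near-diagonal contribution to finiteness of the potential $U(y)=\int|F|(\cos2\kappa\theta(x,y))\,d|\mu|(x)$ --- does not close. Pointwise finiteness of $U$ gives no uniform-in-$r$ smallness of $\iint_{\theta<\eta}|F_r|\,d|\mu|\,d|\mu|$: after Fubini the Poisson kernel smears the mass of $|\mu|$ over all of $X_{\alpha,\beta}$, so the restriction to $\{\theta(x,y)<\eta\}$ buys you nothing, and one would at least need uniform integrability (equivalently, some modulus of continuity) of the potential, not just its finiteness at each point. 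The paper avoids the diagonal splitting entirely. It first notes that admissibility forces $\mu$ to be atomless (if $\sum\widehat F(n)P_n^{(\alpha,\beta)}(1)$ converges one is back in the continuous case, and if it diverges then $F$ is unbounded at the singularity, so atoms are excluded), then proves that the potential $G(z)=\int F(\cos2\kappa\theta(x,z))\,d\mu(x)$ is \emph{continuous} --- an $\eps$--$\delta$ argument combining continuity of $F$ off the singularity with a uniformly small contribution of a small ball $B(z_0,2\delta)$ to the $|\mu|$-integral --- and finally interchanges the order of integration so that the Poisson smoothing acts on $G$ rather than on $F$:
\begin{equation*}
  E_{\mathcal{P}_r(F)}(\mu)=\int_{X_{\alpha,\beta}}\left(\int_{X_{\alpha,\beta}}
  G(z)\,\mathcal{P}_r(\cos2\kappa\theta(y,z))\,d\sigma_{\alpha,\beta}(z)\right)d\mu(y)
  \longrightarrow\int_{X_{\alpha,\beta}}G(y)\,d\mu(y)=E_F(\mu),
\end{equation*}
using the approximate-identity property \eqref{eq:limit} on the continuous function $G$. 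This continuity of the potential is the missing ingredient in your plan; without it (or an equivalent uniform estimate) the interchange of $\lim_{r\to1}$ with the double integral is not justified.
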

\begin{proof}
  We first prove that positive definiteness implies  non-negativity of the
  Fourier coefficients. For this purpose we consider the signed measure
  $d\mu_z(x)=P_n^{(\alpha,\beta)}(\cos(2\kappa\theta(x,z)))\,d\sigma_{\alpha,\beta}(x)$
  for fixed $z$ and compute its energy
  \begin{align}\label{eq:energyjacobi}
    E_F(\mu_z)=\iint\limits_{X_{\alpha,\beta}\times X_{\alpha,\beta}}
    &F(\cos2\kappa\theta(x,y))
    P_n^{(\alpha,\beta)}(\cos(2\kappa\theta(x,z)))\\
 \nonumber   \times &P_n^{(\alpha,\beta)}(\cos(2\kappa\theta(y,z)))\,d\sigma_{\alpha,\beta}(x)\,
    d\sigma_{\alpha,\beta}(y).
  \end{align}
  Since $F$ is positive definite and this energy is obviously finite, we have $E_F(\mu_z) \ge 0$.  

  
    


  We shall use the following relation (see e.g. \cite[Lemma
  9.1]{Bilyk_Matzke_Nathe2024:geodesic_distance_riesz}), which can be seen as a
  projective analogue of the classical Funk--Hecke formula and is easily proved
  using the addition formula
  \begin{align}\label{eq.convo}
 \int_{X_{\alpha,\beta}}  P_n^{(\alpha,\beta)}& \big(\cos (2\kappa \vartheta (x,z)) \big)  P_n^{(\alpha,\beta)}  \big(\cos (2\kappa \vartheta (y,z)) \big) ) d\sigma_{\alpha,\beta}  (z ) \\ 
 \nonumber &=  \frac{P_n^{(\alpha,\beta)} (1) }{m_n^{(\alpha,\beta)}} \, P_n^{(\alpha,\beta)} \big(\cos (2\kappa \vartheta (x,y)) \big) . 
 \end{align}

According to  \eqref{eq:zonal-int} and  \eqref{eq:defcoef}, we see that
\begin{align*}
\widehat{F}(n)=\frac{m_n^{(\alpha,\beta)}}{\left(P_n^{(\alpha,\beta)}(1)\right)^2} &
  \int_{X_{\alpha,\beta}}  \int_{X_{\alpha,\beta}} F(\cos(2\kappa\theta(x,y))) \\
  & \times  P_n^{(\alpha,\beta)}(\cos(2\kappa\theta(x,y)))
    \,d\sigma_{\alpha,\beta}(x)   \,d\sigma_{\alpha,\beta}(y).
\end{align*}
The introduction of the outer  integral with respect to $d\sigma_{\alpha,\beta}(y)$ above is harmless since the inner integral is independent of $y \in X_{\alpha,\beta}$.  Invoking \eqref{eq.convo} and interchanging the order of integration (which is justified since $F$ is integrable with respect to $d\sigma_{\alpha,\beta}$ and $P_n^{(\alpha,\beta)}$ is bounded), we obtain
\begin{align}\label{eq:coef-energy}
\widehat{F}(n) =  \frac{\big( m_n^{(\alpha,\beta)}\big)^2}{\left(P_n^{(\alpha,\beta)}(1)\right)^3}  \int_{X_{\alpha,\beta}}  E_F (\mu_z)     \,d\sigma_{\alpha,\beta}(z) \ge 0,
\end{align}
which proves the first implication.

Conversely, assume now that all Fourier coefficients of $F$ are non-negative. If the series
\begin{equation*}
  \sum_{n=0}^\infty \widehat{F}(n)P_n^{(\alpha,\beta)}(1)
\end{equation*}
converges, then the Fourier series of $F$ converges absolutely and uniformly 
and thus $F$ is continuous, and the assertion follows from the classical result
for continuous kernels.  Thus we assume that the series diverges. Let $\mu$ be
a signed measure such that $F(\cos2\kappa\theta(\cdot,y))\in L^1(|\mu|)$ for
all $y\in X$. Such a measure is necessarily continuous (atomless) under our
assumption on $F$.

Then for $0<r<1$ the function $\mathcal{P}_r(F)$ is continuous and has an
absolutely and uniformly convergent Fourier series. Furthermore,
$\mathcal{P}_r(F)$ is positive definite. The corresponding energy
\begin{multline*}
  E_{\mathcal{P}_r(F)} (\mu) =\iint\limits_{X_{\alpha,\beta}\times X_{\alpha,\beta}}
  \mathcal{P}_r(F)(\cos2\kappa\theta(x,y))\,d\mu(x)\,d\mu(y)\\
  =
  \iint\limits_{X_{\alpha,\beta}\times X_{\alpha,\beta}}
  \int\limits_{X_{\alpha,\beta}}F(\cos2\kappa\theta(x,z))
  \mathcal{P}_r(\cos2\kappa\theta(z,y))\,d\sigma_{\alpha,\beta}(z)
  \,d\mu(x)\,d\mu(y)
\end{multline*}
is then positive for $0<r<1$ and is an increasing function of $r$, which can be seen from \eqref{eq:poissonF}, positivity of $\widehat{F}(n)$, and positive definiteness of $P_n^{(\alpha,\beta)}$. 
We interchange the order of integration and define
\begin{equation*}
  G(z)=\int_{X_{\alpha,\beta}}F(\cos2\kappa\theta(x,z))\,d\mu(x).
\end{equation*}
The function $G$ is then continuous under our assumptions: fix $z_0\in X$
and let $\eps>0$. Then there exists a $\delta>0$ such that
\begin{equation*}
  \forall z\in B(z_0,\delta)\quad\forall x\in X\setminus B(z_0,2\delta):
  |F(\cos2\kappa\theta(x,z_0))-F(\cos2\kappa\theta(x,z))|<\eps.
\end{equation*}
Since $\mu$ is continuous and $F(\cos2\kappa\theta(\cdot,y)) \in L^1(|\mu|)$
for all $z\in X$, we can choose $\delta$ so small that
\begin{equation*}
  \int_{B(z_0,2\delta)}|F(\cos2\kappa\theta(x,z))|\,d|\mu|(x)<\eps
\end{equation*}
for all $z\in X$. Then we have for $z\in B(z_0,\delta)$
\begin{multline*}
  |G(z)-G(z_0)|\leq
  \int\limits_{B(z_0,2\delta)}\left(|F(\cos2\kappa\theta(x,z))|+
    |F(\cos2\kappa\theta(x,z_0))|\right)\,d|\mu|(x)\\
  +\int\limits_{X_{\alpha,\beta}\setminus B(z_0,2\delta)}
  |F(\cos2\kappa\theta(x,z_0))-F(\cos2\kappa\theta(x,z))|\,d|\mu|(x)<
  (2+|\mu|(X_{\alpha,\beta}))\eps,
\end{multline*}
which shows that $G$ is continuous.

We then have
\begin{equation*}
  E_{\mathcal{P}_r(F)}(\mu)=\int\limits_{X_{\alpha,\beta}}
  \int\limits_{X_{\alpha,\beta}}
  G(z)\mathcal{P}_r(\cos2\kappa\theta(y,z))\,d\sigma_{\alpha,\beta}(z)\,d\mu(y).
\end{equation*}
By the continuity of $G$ the inner integral tends to $G(y)$ uniformly in $y$
for $r\to1$. From this we obtain
\begin{equation*}
  \lim_{r\to1}E_{\mathcal{P}_r(F)}(\mu)=\int\limits_{X_{\alpha,\beta}}G(y)\,d\mu(y)=
  E_F(\mu).
\end{equation*}
Since $E_{\mathcal{P}_r(F)}\geq0$ for $0<r<1$, we have obtained $E_F(\mu)\geq0$.
\end{proof}

The following theorem is an immediate corollary of our proof using the
observation that $E_{\mathcal{P}_r(F)}(\mu)$ is an increasing function of $r$.
\begin{theorem}\label{thm:strict}
  Let $F:[-1,1)\to\mathbb{R}$ be a function in
  $L^1(\mu_{\alpha,\beta})\cap \cont([-1,1))$. Then $F$ is strictly positive
  definite if and only if all its Fourier coefficients are strictly positive.
\end{theorem}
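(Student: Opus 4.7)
The plan is to recycle two mechanisms from the proof of Theorem~\ref{thm:posdef}: the identity \eqref{eq:coef-energy}, which expresses $\widehat{F}(n)$ as a nonnegative multiple of $\int_{X_{\alpha,\beta}} E_F(\mu_z)\, d\sigma_{\alpha,\beta}(z)$ for the test measures $d\mu_z = P_n^{(\alpha,\beta)}(\cos 2\kappa\theta(\cdot,z))\, d\sigma_{\alpha,\beta}$, together with the Abel-type limit $E_F(\mu) = \lim_{r\to 1-} E_{\mathcal{P}_r(F)}(\mu)$ and the monotonicity of $r \mapsto E_{\mathcal{P}_r(F)}(\mu)$ that was already recorded there.

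For the forward implication, suppose $F$ is strictly positive definite. For each fixed $n \geq 0$ and $z \in X_{\alpha,\beta}$, the polynomial $P_n^{(\alpha,\beta)}$ is nontrivial, so $\mu_z \neq 0$; and since $|P_n^{(\alpha,\beta)}|$ is bounded, the integrability requirement $F(\cos 2\kappa\theta(\cdot, y)) \in L^1(|\mu_z|)$ is inherited from $F \in L^1(\mu_{\alpha,\beta})$. Strict positive definiteness therefore forces $E_F(\mu_z) > 0$ for every $z$, and \eqref{eq:coef-energy} immediately yields $\widehat{F}(n) > 0$.

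For the converse, suppose every $\widehat{F}(n) > 0$ and let $\mu \neq 0$ be a signed measure admissible for $F$. For each $r \in (0,1)$, the function $\mathcal{P}_r(F)$ is continuous on $[-1,1]$ and, by \eqref{eq:poissonF}, has strictly positive Fourier coefficients $\widehat{F}(n) r^n$. The classical strict positive definiteness criterion for continuous zonal kernels on two-point homogeneous spaces (cf.\ \cite{Gangolli1967:positive_definite_kernels, Barbosa_Menegatto2016:strictly_positive_definite}) therefore gives $E_{\mathcal{P}_r(F)}(\mu) > 0$, and monotonicity in $r$ produces $E_F(\mu) \geq E_{\mathcal{P}_r(F)}(\mu) > 0$.

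The main obstacle I anticipate is applying the classical continuous-kernel result in the signed-measure form rather than only to finitely supported configurations. A Parseval-type expansion writes $E_{\mathcal{P}_r(F)}(\mu) = \sum_n \widehat{F}(n) r^n \|\Pi_n \mu\|^2$, where $\Pi_n$ denotes projection onto the $n$-th Laplacian eigenspace; strict positivity of all the coefficients then reduces strict positive definiteness of $\mathcal{P}_r(F)$ to the injectivity of $\mu \mapsto (\Pi_n \mu)_n$ on signed Borel measures, which follows from density of the eigenspaces in $\cont(X_{\alpha,\beta})$.
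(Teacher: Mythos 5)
Your proposal is correct and takes essentially the same route as the paper, which presents Theorem~\ref{thm:strict} as an immediate corollary of the proof of Theorem~\ref{thm:posdef}: the identity \eqref{eq:coef-energy} gives the forward direction, and the monotonicity of $E_{\mathcal{P}_r(F)}(\mu)$ in $r$ combined with strict positive definiteness of the continuous kernel $\mathcal{P}_r(F)$ gives the converse. Your closing paragraph merely spells out the Parseval/injectivity argument that the paper leaves implicit when invoking the classical continuous-kernel result for signed measures.
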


In addition, one can easily see that, since measures of mass zero annihilate constants, the proof easily adapts to yield a similar characterization holds for (strict) conditional positive definiteness. 
\begin{theorem}\label{thm:conditional}
  Let $F:[-1,1)\to\mathbb{R}$ be a function in
  $L^1(\mu_{\alpha,\beta})\cap \cont([-1,1))$. Then the kernel $F$ is (strictly) conditionally positive
  definite if and only if  its Fourier coefficients $\widehat{F}(n)$ are non-negative  (strictly positive) for all $n\ge 1$. 
\end{theorem}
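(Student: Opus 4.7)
The plan is to reduce Theorem~\ref{thm:conditional} to the unconditional Theorems~\ref{thm:posdef} and~\ref{thm:strict}. The key observation is that for any constant $c$,
\begin{equation*}
  E_F(\mu) - E_{F-c}(\mu) = c\,\mu(X_{\alpha,\beta})^2,
\end{equation*}
so whenever $\mu(X_{\alpha,\beta}) = 0$ the energies agree. Setting $\tilde F := F - \widehat F(0)$, the Fourier coefficients of $\tilde F$ match those of $F$ for $n\ge 1$ and $\widehat{\tilde F}(0)=0$, so the theorem can be rephrased as a statement about $\tilde F$ restricted to zero-mass measures.

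For the forward direction, I would test conditional positive definiteness on the same measures used in the proof of Theorem~\ref{thm:posdef}, namely
$d\mu_z(x) = P_n^{(\alpha,\beta)}\bigl(\cos(2\kappa\theta(x,z))\bigr)\,d\sigma_{\alpha,\beta}(x)$. By orthogonality of $P_n^{(\alpha,\beta)}$ with $P_0^{(\alpha,\beta)}\equiv 1$ and \eqref{eq:zonal-int}, one has $\mu_z(X_{\alpha,\beta})=0$ for every $n\ge 1$; thus the conditional hypothesis applies and the identity \eqref{eq:coef-energy} yields $\widehat F(n)\ge 0$ (resp. $>0$ in the strict case, noting that $\mu_z\not\equiv 0$) for all $n\ge 1$.

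For the reverse non-strict direction, non-negativity of all Fourier coefficients of $\tilde F$ and Theorem~\ref{thm:posdef} give that $\tilde F$ is positive definite, so $E_F(\mu) = E_{\tilde F}(\mu)\ge 0$ for every admissible zero-mass $\mu$. The strict case needs more care because Theorem~\ref{thm:strict} cannot be applied to $\tilde F$: its zeroth coefficient vanishes. I would instead imitate the Poisson-kernel approximation from the proof of Theorem~\ref{thm:posdef}: for $\mu$ with $\mu(X_{\alpha,\beta})=0$,
\begin{equation*}
  E_{\mathcal{P}_r(\tilde F)}(\mu) = \sum_{n\ge 1}\widehat F(n)\, r^n \iint P_n^{(\alpha,\beta)}\bigl(\cos 2\kappa\theta(x,y)\bigr)\,d\mu(x)\,d\mu(y),
\end{equation*}
where the $n=0$ term has dropped out because $\mu(X_{\alpha,\beta})=0$. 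By the addition formula each inner integral is a positive multiple of $\sum_j \bigl|\int_{X_{\alpha,\beta}} Y_{n,j}\,d\mu\bigr|^2$ for an orthonormal basis $\{Y_{n,j}\}_j$ of the $n$-th eigenspace, hence non-negative. Combined with the monotonicity in $r$ of $E_{\mathcal{P}_r(\tilde F)}(\mu)$ and the passage $r\to 1$, strict positivity of all $\widehat F(n)$ for $n\ge 1$ would yield $E_F(\mu) = E_{\tilde F}(\mu) > 0$.

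The main obstacle is the final step of the strict argument: showing that for nonzero $\mu$ with $\mu(X_{\alpha,\beta})=0$, at least one of the sums $\sum_j \bigl|\int Y_{n,j}\,d\mu\bigr|^2$ with $n\ge 1$ is strictly positive. This is a density argument: were they all zero, $\mu$ would annihilate every eigenfunction with $n\ge 1$, and together with $\mu(X_{\alpha,\beta})=0$ (the $n=0$ eigenspace) $\mu$ would annihilate a uniformly dense subspace of $\cont(X_{\alpha,\beta})$, forcing $\mu=0$. The care required is just ensuring this density is applied correctly to signed Borel measures via the Riesz representation theorem.
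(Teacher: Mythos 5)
Your proposal is correct and follows essentially the same route as the paper, which disposes of Theorem~\ref{thm:conditional} by noting that the proofs of Theorems~\ref{thm:posdef} and~\ref{thm:strict} adapt because zero-mass measures annihilate constants; your subtraction of $\widehat F(0)$, the use of the test measures $d\mu_z$ (which have zero mass for $n\ge 1$ by orthogonality), and the Poisson-kernel limit with the eigenfunction-density argument for strictness are exactly the details the paper leaves implicit.
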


\subsection*{Positive definiteness and energy minimization}

In general, the connection between positive definiteness and minimizing energy
integrals is classical
\cite{Borodachov_Hardin_Saff2019:discrete_energy_rectifiable}. However, general
equivalence results for singular kernels are not found in the literature. We
use characterizations proved above to yield the necessary and sufficient
condition for the energy $E_F (\mu)$ with a singular kernel $F$ to be minimized
by the normalized Haar measure $\sigma_{\alpha,\beta}$ amongst all Borel
probability measures.

\begin{theorem}\label{thm:energyminimize}
Let $F:[-1,1)\to\mathbb{R}$ be a function in $L^1(\mu_{\alpha,\beta})\cap \cont([-1,1))$. Then the energy integral
 \begin{equation}\label{eq:energy}
    E_F(\mu)=\iint\limits_{X_{\alpha,\beta}\times X_{\alpha,\beta}}
    F(\cos2\kappa\theta(x,y))\,d\mu(x)\,d\mu(y)
  \end{equation}
  is \textup{(}uniquely, resp.\textup{)} minimized by the normalized Haar
  measure $\sigma_{\alpha,\beta}$ amongst all Borel probability measures if and
  only if the kernel $F$ is \textup{(}strictly, resp.\textup{)} conditionally
  positive definite.
\end{theorem}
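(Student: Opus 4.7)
The plan is to reduce both directions to the characterization in Theorem~\ref{thm:conditional} via a simple energy decomposition. The key identity to record first is that, for any Borel probability measure $\mu$ for which $E_F(\mu)$ is defined,
\begin{equation*}
  E_F(\mu) = E_F(\sigma_{\alpha,\beta}) + E_F(\mu - \sigma_{\alpha,\beta}).
\end{equation*}
Bilinear expansion of $E_F\bigl(\sigma_{\alpha,\beta}+(\mu-\sigma_{\alpha,\beta})\bigr)$ produces a cross term equal to $2\int\bigl(\int F(\cos 2\kappa\vartheta(x,y))\,d\sigma_{\alpha,\beta}(y)\bigr)\,d(\mu-\sigma_{\alpha,\beta})(x)$. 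Two-point homogeneity makes the inner integral independent of $x$ (it equals $\widehat{F}(0)$), so the outer integral against the mass-zero measure $\mu-\sigma_{\alpha,\beta}$ vanishes. Finiteness of $E_F(\sigma_{\alpha,\beta})=\widehat{F}(0)$ follows from $F\in L^1(\mu_{\alpha,\beta})$, and the Fubini rearrangements are legitimate under the standing integrability hypothesis.

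The sufficiency direction is then immediate. If $F$ is (strictly) conditionally positive definite, then $\mu-\sigma_{\alpha,\beta}$ is a signed measure of zero total mass, so $E_F(\mu-\sigma_{\alpha,\beta})\geq 0$, with strict inequality whenever $\mu\neq\sigma_{\alpha,\beta}$. The decomposition delivers $E_F(\mu)\geq E_F(\sigma_{\alpha,\beta})$ with the analogous strictness.

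For the necessity direction I would invoke Theorem~\ref{thm:conditional} and establish $\widehat{F}(n)\geq 0$ (respectively $>0$) for all $n\geq 1$. Fix $n\geq 1$ and $z_0\in X_{\alpha,\beta}$, and for $0<\epsilon<1/P_n^{(\alpha,\beta)}(1)$ consider the perturbation
\begin{equation*}
  d\mu_\epsilon(x) = \bigl(1 + \epsilon P_n^{(\alpha,\beta)}(\cos 2\kappa\vartheta(x,z_0))\bigr)\,d\sigma_{\alpha,\beta}(x).
\end{equation*}
The classical bound $|P_n^{(\alpha,\beta)}(t)|\leq P_n^{(\alpha,\beta)}(1)$ on $[-1,1]$ keeps the density non-negative, while $\int P_n^{(\alpha,\beta)}\,d\sigma_{\alpha,\beta}=0$ gives total mass one, and $\mu_\epsilon\neq\sigma_{\alpha,\beta}$ because $P_n^{(\alpha,\beta)}$ is a non-constant polynomial. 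Since $\mu_\epsilon-\sigma_{\alpha,\beta}=\epsilon\,\mu_{z_0}$ with $\mu_{z_0}$ exactly the Jacobi-weighted signed measure used in the proof of Theorem~\ref{thm:posdef}, the decomposition yields
\begin{equation*}
  E_F(\mu_\epsilon) - E_F(\sigma_{\alpha,\beta}) = \epsilon^2\,E_F(\mu_{z_0}).
\end{equation*}
The minimization hypothesis thus forces $E_F(\mu_{z_0})\geq 0$, strictly so if $\sigma_{\alpha,\beta}$ is the unique minimizer. Inserting this into identity \eqref{eq:coef-energy}, together with the observation that two-point homogeneity makes $E_F(\mu_z)$ independent of $z$, concludes $\widehat{F}(n)\geq 0$ (respectively $>0$), as required.

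The principal technical checkpoint is ensuring that the decomposition and the Fubini manipulations remain valid under the minimal $L^1$ hypothesis, and that $\mu_\epsilon$ is an admissible test measure. Both are routine here because $d\mu_\epsilon/d\sigma_{\alpha,\beta}$ is bounded, so integrability of $F(\cos 2\kappa\vartheta(\cdot,y))$ with respect to $\mu_\epsilon$ is inherited from integrability with respect to $\sigma_{\alpha,\beta}$, which in turn follows from $F\in L^1(\mu_{\alpha,\beta})$ via \eqref{eq:zonal-int}.
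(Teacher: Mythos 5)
Your proof is correct, and the necessity direction coincides with the paper's argument: the same perturbation $d\mu_\epsilon = (1+\epsilon P_n^{(\alpha,\beta)}(\cos2\kappa\theta(\cdot,z_0)))\,d\sigma_{\alpha,\beta}$, the same reduction to the sign of $E_F(\mu_{z_0})$, and the same use of \eqref{eq:coef-energy} together with the $z$-independence of $E_F(\mu_z)$. Where you genuinely diverge is the sufficiency direction. The paper smooths $F$ with the Poisson kernel, invokes the classical minimization result for the continuous kernel $\mathcal{P}_r(F)$, and passes to the limit $r\to1$ (citing Borodachov--Hardin--Saff for uniqueness in the strict case); you instead prove the orthogonal decomposition $E_F(\mu)=E_F(\sigma_{\alpha,\beta})+E_F(\mu-\sigma_{\alpha,\beta})$ directly, observing that the cross term is $\widehat{F}(0)\cdot(\mu-\sigma_{\alpha,\beta})(X_{\alpha,\beta})=0$ by two-point homogeneity, and then apply the measure-theoretic definition of (strict) conditional positive definiteness to $\mu-\sigma_{\alpha,\beta}$. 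Your route is shorter and self-contained -- it exploits the fact that the paper already defines conditional positive definiteness in terms of arbitrary signed measures rather than point configurations, so no approximation is needed, and it yields uniqueness in the strict case for free. What the paper's route buys is a uniform mechanism (the Poisson smoothing) reused across Theorems~\ref{thm:posdef}--\ref{thm:schur-sing}, and it sidesteps having to justify the splitting of the double integral. On that last point you should be slightly more explicit: to expand $E_F(\sigma_{\alpha,\beta}+(\mu-\sigma_{\alpha,\beta}))$ bilinearly you need absolute convergence of $\iint|F|\,d\mu\,d\mu$ and the integrability $F(\cos2\kappa\theta(\cdot,y))\in L^1(\mu)$ required to apply the definition to $\mu-\sigma_{\alpha,\beta}$; this restricts the competing measures to the admissible class, but the paper's own proof makes exactly the same implicit restriction (``for any Borel probability measure such that $E_F(\mu)$ is finite''), so this is a shared technicality rather than a gap in your argument.
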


This result, according to Theorem \ref{thm:conditional}, implies that the fact
that the uniform measure $\sigma_{\alpha,\beta}$ minimizes $E_F (\mu)$
(i.e. energy minimization leads to uniform distribution) amounts to checking
the positivity of the Fourier (Jacobi) coefficients.  In the case of continuous
kernels, this result is well known and can be found in different variations in
\cite{Bilyk_Dai2019:geodesic_distance_riesz,
  Schoenberg1942:positive_definite_functions,
  Damelin_Grabner2003:energy_functional_numeric,
  Bochner1941:hilbert_distances_positive}, but it has not been stated in such
generality for singular kernels. Certain ad hoc approximation tricks have been
used for some particular (Riesz) kernels
\cite{Anderson_Dostert_Grabner+2023:riesz_green_energy,
  Damelin_Grabner2003:energy_functional_numeric,
  Bilyk_Dai2019:geodesic_distance_riesz}, and it was shown in
\cite[Theorem 4.2]{Bilyk_Matzke_Nathe2024:geodesic_distance_riesz} that
positivity of the coefficients is sufficient for a fairly general class of
singular kernels, with certain restrictions on the behavior at the
singularity. We demonstrate that positivity (non-negativity) of the
coefficients is a necessary and sufficient condition, assuming merely the
integrability of the kernel (i.e. finiteness of the energy
$E_F (\sigma_{\alpha,\beta})$) and continuity away from the singularity.

\begin{proof}[Proof of Theorem~\ref{thm:energyminimize}]
  Let us first assume that $F$ is conditionally positive definite. According to
  Theorem \ref{thm:conditional}, this is equivalent to the fact that
  $\widehat{F} (n) \ge 0$ for $n\ge 1$. Since $\mathcal{P}_r(F)$ is continuous
  and conditionally positive definite, the aforementioned classical results
  imply that $E_{\mathcal{P}_r(F)}(\mu)$ is minimized by
  $\sigma_{\alpha,\beta}$. Arguing exactly as in the second part of Theorem
  \ref{thm:posdef}, we find that for any Borel probability measure such that
  $E_F (\mu)$ is finite
\begin{equation}
 E_F(\mu) =  \lim_{r\to1}E_{\mathcal{P}_r(F)}(\mu) \ge  \lim_{r\to1}E_{\mathcal{P}_r(F)}( \sigma_{\alpha,\beta}) = E_{F}( \sigma_{\alpha,\beta}),
\end{equation}
i.e. $E_F$ is minimized by $\sigma$.  In the case of strict conditional
positive definiteness, uniqueness of the minimizer follows from general
potential theory, see
\cite[Theorem~4.2.7]{Borodachov_Hardin_Saff2019:discrete_energy_rectifiable}.

Conversely, if $F$ is not conditionally positive definite, i.e. there exists
$n\ge 1$ such that $\widehat{F} (n) <0$, one can choose the
measure
$$d\mu_z(x)=\big(1 + \varepsilon
P_n^{(\alpha,\beta)}(\cos(2\kappa\theta(x,z)))\big)
\,d\sigma_{\alpha,\beta}(x),$$ where $\varepsilon >0$ is chosen small enough so
that the measure is positive. It is standard to show, using the addition
formula for Jacobi polynomials (see
e.g. \cite[Theorem~2.8]{Anderson_Dostert_Grabner+2023:riesz_green_energy}) that
the value of the energy $E_F (\mu_z)$ is independent of
$z \in X_{\alpha,\beta}$. Therefore, a computation almost identical to the one
presented in the first part of the proof of Theorem \ref{thm:posdef} and based
on \eqref{eq.convo} and \eqref{eq:coef-energy} demonstrates that
\begin{equation}
E_F (\mu_z) =  E_{F}( \sigma_{\alpha,\beta}) +     \varepsilon^2   \frac{\left(P_n^{(\alpha,\beta)}(1)\right)^3}{\big( m_n^{(\alpha,\beta)}\big)^2} \widehat{F}(n) <  E_{F}( \sigma_{\alpha,\beta}),
\end{equation}
i.e. $ \sigma_{\alpha,\beta}$ does not minimize the energy. If $F$ is conditionally  positive definite, but not strictly, then   $ \widehat{F}(n) =0$ for some $n\ge 1$ and thus $E_F (\mu_z) =  E_{F}( \sigma_{\alpha,\beta})$, hence $\sigma_{\alpha,\beta}$ cannot be a unique minimizer. 
\end{proof}



\section{Schur's lemma for singular integrable kernels}
\label{sec:schurs-lemma}
Schur's lemma is the most important structural result in the context of
continuous positive definite functions. It asserts that the Hadamard product of
two positive definite matrices is again positive definite. As a consequence the
product of two continuous positive definite functions is again positive
definite. This implication depends directly on the fact of continuity (see
Remark~\ref{rem:schur}). The following theorem gives an analogue of Schur's
lemma in the context of singular positive definite kernels.
\begin{theorem}\label{thm:schur-sing}
  Let $F_1,F_2:[-1,1)\to\mathbb{R}$ be positive definite functions in
  $L^1(\mu_{\alpha,\beta})\cap \cont([-1,1))$ such that their product is also in
  $L^1(\mu_{\alpha,\beta})\cap \cont([-1,1))$. Then $F_1F_2$ is also positive
  definite.
\end{theorem}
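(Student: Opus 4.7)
The plan is to reduce the claim, via Theorem~\ref{thm:posdef}, to non-negativity of every Jacobi coefficient $\widehat{F_1F_2}(n)$, and to approximate $F_i$ by its Abel means $\mathcal{P}_r(F_i)$. Positive definiteness of $F_i$ together with Theorem~\ref{thm:posdef} gives $\widehat{F_i}(k)\ge 0$ for every $k$. The bound
\[
|\widehat{F_i}(k)|\;\le\;\frac{m_k^{(\alpha,\beta)}}{P_k^{(\alpha,\beta)}(1)}\,\|F_i\|_{L^1(\mu_{\alpha,\beta})},
\]
coming from \eqref{eq:defcoef} together with $\|P_k^{(\alpha,\beta)}\|_\infty=P_k^{(\alpha,\beta)}(1)$, combined with the polynomial growth of $m_k^{(\alpha,\beta)}$ and $P_k^{(\alpha,\beta)}(1)$ in $k$, ensures that for each $r\in(0,1)$ the series
\[
\mathcal{P}_r(F_i)(t)=\sum_{k\ge0}r^{k}\widehat{F_i}(k)P_k^{(\alpha,\beta)}(t)
\]
converges absolutely and uniformly on $[-1,1]$. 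Thus $\mathcal{P}_r(F_i)\in\cont([-1,1])$, and since its coefficients $r^{k}\widehat{F_i}(k)$ are non-negative, $\mathcal{P}_r(F_i)$ is continuous and positive definite.

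Next I would invoke the classical Schur lemma for continuous positive definite kernels---a direct consequence of Schur's Hadamard-product theorem for positive semidefinite matrices, as recalled in Remark~\ref{rem:schur}---to conclude that $\mathcal{P}_r(F_1)\cdot\mathcal{P}_r(F_2)$ is continuous and positive definite. Theorem~\ref{thm:posdef} applied to this product then yields
\[
\widehat{\mathcal{P}_r(F_1)\mathcal{P}_r(F_2)}(n)\;\ge\;0\qquad\text{for every }n\ge 0,\ r\in(0,1).
\]

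The main obstacle---and the technical heart of the argument---is the passage $r\to 1^-$: what remains is to verify
\[
\int_{-1}^{1}\mathcal{P}_r(F_1)(t)\mathcal{P}_r(F_2)(t)\,P_n^{(\alpha,\beta)}(t)\,d\mu_{\alpha,\beta}(t)\;\longrightarrow\;\int_{-1}^{1}F_1(t)F_2(t)\,P_n^{(\alpha,\beta)}(t)\,d\mu_{\alpha,\beta}(t),
\]
so that non-negativity passes to $\widehat{F_1F_2}(n)$ and Theorem~\ref{thm:posdef} closes the argument. Pointwise convergence of the integrand on $[-1,1)$ and uniform convergence on every $[-1,1-\eps]$ are supplied by \eqref{eq:limit}--\eqref{eq:poissonF}, so the bulk of the integral is handled immediately. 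The sensitive issue is a uniform-in-$r$ control of the tail $\int_{1-\eps}^{1}|\mathcal{P}_r(F_1)\mathcal{P}_r(F_2)|\,d\mu_{\alpha,\beta}$, and this is where the hypothesis $F_1F_2\in L^1(\mu_{\alpha,\beta})$ enters essentially. Combining the positivity bound $|\mathcal{P}_r(F_i)|\le\mathcal{P}_r(|F_i|)$, the self-adjointness of $\mathcal{P}_r$, and the semigroup identity $\mathcal{P}_r\mathcal{P}_r=\mathcal{P}_{r^2}$, one can estimate the tail of $\int|\mathcal{P}_r(F_1)\mathcal{P}_r(F_2)|\,d\mu_{\alpha,\beta}$ uniformly in $r$ by a quantity that vanishes as $\eps\to 0$; a Vitali-type argument then completes the limit interchange, and $\widehat{F_1F_2}(n)\ge 0$ together with Theorem~\ref{thm:posdef} finishes the proof.
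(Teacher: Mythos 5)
Your first two steps are sound and parallel the paper's starting point: $\mathcal{P}_r(F_i)$ is continuous with non-negative coefficients $r^k\widehat{F_i}(k)$, hence positive definite, and the classical Schur lemma applies to the product of the two Abel means. The reduction of the conclusion to $\widehat{F_1F_2}(n)\ge 0$ via Theorem~\ref{thm:posdef} is also legitimate. The problem is exactly where you locate it, and your proposed resolution does not close it. The identity $\mathcal{P}_r\mathcal{P}_r=\mathcal{P}_{r^2}$ is a statement about \emph{composition} of the convolution operators; it says nothing about the \emph{pointwise product} $\mathcal{P}_r(F_1)(t)\cdot\mathcal{P}_r(F_2)(t)$, which is the object you must integrate. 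If you write the tail as
\begin{equation*}
  \iint |F_1|(y)\,|F_2|(z)\left[\ \int\limits_{\{\cos2\kappa\theta(x,a)>1-\eps\}}
    \mathcal{P}_r(\cos2\kappa\theta(x,y))\,\mathcal{P}_r(\cos2\kappa\theta(x,z))\,d\sigma_{\alpha,\beta}(x)\right]
  d\sigma_{\alpha,\beta}(y)\,d\sigma_{\alpha,\beta}(z),
\end{equation*}
then bounding the bracket by the full integral $\mathcal{P}_{r^2}(\cos2\kappa\theta(y,z))$ (which is where the semigroup law enters) produces a quantity that tends to $\|F_1F_2\|_{L^1}$ as $r\to1$ --- finite, but not small in $\eps$. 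To get smallness you must keep the localization to the ball and run a near/far decomposition in $(y,z)$ with uniform-in-$r$ maximal-function-type estimates; under the bare hypotheses $F_1,F_2,F_1F_2\in L^1$ (which do \emph{not} give $F_i\in L^2$, so you cannot fall back on $\mathcal{P}_rF_i\to F_i$ in $L^2$ and Cauchy--Schwarz) this is genuinely delicate and is precisely the content you have asserted rather than proved. A ``Vitali-type argument'' needs uniform integrability of the family $\mathcal{P}_r(F_1)\mathcal{P}_r(F_2)$, and that is the whole difficulty.

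The paper avoids this obstruction by never forming the product of two smoothed kernels. It smooths \emph{one factor at a time}: first it proves the theorem for $F_1\in L^1\cap\cont([-1,1))$ and $F_2\in\cont([-1,1])$, replacing $F_1$ by $F_{1,r}=\mathcal{P}_r(F_1)$, applying classical Schur to $F_{1,r}F_2$, and then passing to the limit not through Jacobi coefficients but through the energy itself: one interchanges integrals, introduces $G(y,z)=\int F_1(x,z)F_2(x,y)\,d\mu(x)$, shows $G$ is continuous by the same $\eps$--$\delta$ argument as in Theorem~\ref{thm:posdef} (this is where $F_1F_2\in L^1(|\mu|\times|\mu|)$-type integrability is exploited), and uses the approximate-identity property $\int G(y,z)\mathcal{P}_r(\cos2\kappa\theta(z,y))\,d\sigma(z)\to G(y,y)$ uniformly. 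The general case then follows by applying this restricted statement to $F_1$ and $F_{2,t}$ and letting $t\to1$. If you want to salvage your coefficient-based route, the cleanest fix is to adopt the same asymmetric smoothing and the energy formulation; as written, the limit interchange is a genuine gap.
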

\begin{proof}
  We start with proving the theorem for the restricted case with
  $F_1\in L^1(\mu_{\alpha,\beta})\cap \cont([-1,1)$ and $F_2\in
  \cont([-1,1])$. We define
  \begin{equation*}
    F_{1,r}=\mathcal{P}_r(F_1)
  \end{equation*}
  which is continuous and positive definite on $[-1,1]$ for $0<r<1$.
  
  Both functions are positive definite by assumption. Let $\mu$ be a signed
  measure such that
  $F_1(\cos(2\kappa\theta\cdot,y))\in  L^1(|\mu|)$. Then we have
  \begin{align*}
    0&\leq\int_{X_{\alpha,\beta}}\int_{X_{\alpha,\beta}}
    F_{1,r}(\cos2\kappa\theta(x,y))F_2(\cos2\kappa\theta(x,y))
    \,d\mu(x)\,d\mu(y)\\
   &=
   \int_{X_{\alpha,\beta}}\int_{X_{\alpha,\beta}}\int_{X_{\alpha,\beta}}
   F_1(\cos2\kappa\theta(x,z))\mathcal{P}_r(\cos2\kappa\theta(z,y))\\
   &\quad\times F_2(\cos2\kappa\theta(x,y))\,d\sigma(z)\,d\mu(x)\,d\mu(y).
 \end{align*}
 We interchange the order of integration and set
 \begin{equation*}
   G(y,z)=\int_{X_{\alpha,\beta}}F_1(x,z)F_2(x,y)\,d\mu(x).
 \end{equation*}
 This function is then continuous on $X_{\alpha,\beta}\times X_{\alpha,\beta}$
 by the same argument as in the
 proof of Theorem~\ref{thm:posdef}. By the properties of the Poisson kernel
 \begin{equation*}
   \int_X G(y,z)\mathcal{P}_r(\cos2\kappa\theta(z,y))\,d\sigma(z)\to G(y,y)
 \end{equation*}
 for $r\to1$ uniformly on $X_{\alpha,\beta}$. From this we obtain
 \begin{align*}
   0&\leq \int_{X_{\alpha,\beta}}\int_{X_{\alpha,\beta}}
   F_{1,r}(\cos2\kappa\theta(x,y))F_2(\cos2\kappa\theta(x,y))
   \,d\mu(x)\,d\mu(y)\\
   &\to
   \int_{X_{\alpha,\beta}} G(y,y)\,d\sigma_{\alpha,\beta}(y)\\
   &=\int_X\int_X F_1(\cos2\kappa\theta(x,y))F_2(\cos2\kappa\theta(x,y))
   \,d\mu(x)\,d\mu(y)
 \end{align*}
 for $r\to1$.

 For proving the general case we apply the assertion proved above to $F_1$ and
 $F_{2,t}$ (defined analogous to $F_{1,t}$) and again take the limit  as $r\to1$.
\end{proof}


\section{Functions that are positive definite for all
  $\mathbb{FP}^{d-1}$}\label{sec:functions-for-all}
In \cite{Schoenberg1942:positive_definite_functions} Schoenberg characterizes
positive definite functions on the spheres $\mathbb{S}^{d-1}$ by the positivity
of the coefficients of their ultraspherical expansion. Furthermore, he gives a
characterization of the functions that are positive definite on all spheres. He
proves the following theorem:
\begin{theorem}[{\cite[Theorem~2]{Schoenberg1942:positive_definite_functions}}]
  \label{thm:schoenberg}
  A function $f:[-1,1]\to\mathbb{R}$ is positive definite on all spheres
  $\mathbb{S}^{d-1}$, if and only if it can be expressed in the form
  \begin{equation*}
    f(\cos\theta)=\sum_{n=0}^\infty a_n\cos^n\theta
  \end{equation*}
  with $a_n\geq0$ for all $n$ and $\sum_{n=0}^\infty a_n<\infty$. 
\end{theorem}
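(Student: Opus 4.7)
My plan is to handle sufficiency by Schur's lemma (Theorem~\ref{thm:schur-sing}), and necessity by invoking Theorem~\ref{thm:posdef} on each individual sphere and then passing to the limit $d\to\infty$ through a Cantor diagonal argument on the Jacobi coefficients.

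For sufficiency, I would first note that on $\mathbb{S}^{d-1}\subset\mathbb{R}^d$ the kernel $\cos\theta(x,y)=\langle x,y\rangle$ is positive definite, because
\[
\iint\langle x,y\rangle\,d\mu(x)\,d\mu(y)=\Bigl\|\int x\,d\mu(x)\Bigr\|^2\ge 0
\]
for every admissible signed measure $\mu$. Schur's lemma then yields positive definiteness of each $\cos^n\theta$ on every sphere, and a series $\sum a_n\cos^n\theta$ with $a_n\ge 0$ and $\sum a_n<\infty$ is uniformly dominated by $\sum a_n$, hence converges to a continuous function that inherits positive definiteness from its partial sums.

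For necessity, fix $f$ positive definite on every sphere. The sphere $\mathbb{S}^{d-1}$ fits the framework of Section~\ref{sec:harm-analys-two} with $\alpha_d=\beta_d=(d-3)/2$ and $\kappa=\tfrac12$, so Theorem~\ref{thm:posdef} gives an absolutely and uniformly convergent expansion
\[
f(t)=\sum_{n=0}^\infty b_n^{(d)}\,\widetilde{P}_n^{(d)}(t),\qquad \widetilde{P}_n^{(d)}(t):=\frac{P_n^{(\alpha_d,\alpha_d)}(t)}{P_n^{(\alpha_d,\alpha_d)}(1)},
\]
with $b_n^{(d)}\ge 0$ and (evaluating at $t=1$) $\sum_n b_n^{(d)}=f(1)$. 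The central analytic ingredient is the classical pointwise limit $\widetilde{P}_n^{(d)}(t)\to t^n$ on $[-1,1]$ as $d\to\infty$ for every fixed $n$, together with the uniform bound $|\widetilde{P}_n^{(d)}(t)|\le 1$. Since $0\le b_n^{(d)}\le f(1)$, a Cantor diagonal extraction produces a subsequence $d_k\to\infty$ along which $b_n^{(d_k)}\to a_n\ge 0$ for every $n$, and Fatou gives $\sum_n a_n\le f(1)$.

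Splitting the expansion at a finite level $N$ and using the crude tail bound $\bigl|\sum_{n>N}b_n^{(d_k)}\widetilde{P}_n^{(d_k)}(t)\bigr|\le\sum_{n>N}b_n^{(d_k)}$, then letting first $k\to\infty$ and then $N\to\infty$, I obtain the uniform a priori estimate
\[
\Bigl|f(t)-\sum_n a_n t^n\Bigr|\le f(1)-\sum_n a_n,\qquad t\in[-1,1].
\]
The hard part will be closing this estimate by ruling out escape of mass to $n=\infty$, i.e.\ proving $\sum_n a_n=f(1)$. I would start by pinning down $a_0=f(0)$: since $d\mu_{\alpha_d,\alpha_d}=C_d(1-t^2)^{\alpha_d}\,dt$ concentrates at $t=0$ as $d\to\infty$, the orthogonality identity $b_0^{(d)}=\int f\,d\mu_{\alpha_d,\alpha_d}$ forces $b_0^{(d)}\to f(0)$. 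The residual $\eta(t):=f(t)-\sum_n a_n t^n$ is then itself continuous and (by the sufficiency half already established) positive definite on all spheres, with $\eta(0)=0$ and $|\eta(t)|\le\eta(1)$; iterating the extraction procedure on $\eta$ must peel off a vanishing constant term at each step, which forces $\eta\equiv 0$. Once $f(t)=\sum_n a_n t^n$ is known on $(-1,1)$, Abel's theorem (applicable since $a_n\ge 0$) together with continuity of $f$ extends the identity to $t=\pm 1$ and gives $\sum_n a_n=f(1)$.
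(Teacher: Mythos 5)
Your sufficiency argument is correct, and the first part of your necessity argument is also sound: the expansion on each sphere with nonnegative coefficients summing to $f(1)$, the diagonal extraction, Fatou, and the a priori estimate $\bigl|f(t)-\sum_n a_n t^n\bigr|\le f(1)-\sum_n a_n$ all go through (note that the paper does not actually reprove Theorem~\ref{thm:schoenberg} --- it quotes it from Schoenberg --- but this is also how the proof of its projective analogue, Theorem~\ref{thm:sing-schoen}, begins). The genuine gap is exactly at the point you flag as the hard part, and the mechanism you propose for closing it does not work. First, the residual $\eta:=f-\sum_n a_n t^n$ is a \emph{difference} of two positive definite functions; the sufficiency half gives positive definiteness of the subtracted power series, not of $\eta$, so you have no right to run the extraction on $\eta$. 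Second, even if $\eta$ were positive definite on all spheres, the data $\eta(0)=0$, $\eta(1)=c$, $|\eta|\le c$ do not force $\eta\equiv 0$: the function $c\,t^2$ satisfies all three and is positive definite on every sphere. Worse, one can check (each monomial $t^m$ has nonnegative Gegenbauer coefficients on $\mathbb S^{d-1}$ summing to $1$, so dominated convergence applies to $\sum_m a_m t^m$) that \emph{every} diagonal-limit coefficient of $\eta$ vanishes along your subsequence, not just the zeroth one. Hence the iteration peels off nothing at any level and simply stalls; it cannot force $\eta\equiv0$.

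What is actually needed to rule out escape of mass is a quantitative bound showing that $\sup_{|t|\le 1-\delta}\bigl|\widetilde{P}_n^{(d)}(t)\bigr|$ is small for large $n$ (in suitable ranges relative to $d$), so that the tail $\sum_{n>N}b_n^{(d)}\widetilde{P}_n^{(d)}(t)$ is uniformly small for $t$ bounded away from the endpoints. This yields $f(t)=\sum_n a_n t^n$ pointwise on $(-1,1)$ directly, after which $\sum_n a_n=f(1)$ follows by continuity of $f$ and monotone convergence as $t\to1^-$. This is precisely the role played by Lemma~\ref{lem:pn-bound} (the estimates \eqref{eq:pn-bound1} and \eqref{eq:pn-bound2}, applied in two complementary ranges of the summation index) in the paper's proof of Theorem~\ref{thm:sing-schoen}; an analogous decay estimate for normalized ultraspherical polynomials away from $t=\pm1$ is the missing ingredient in your argument.
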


\begin{remark}
  We note here that Guella and Menegatto
  \cite{Guella_Menegatto2018:limit_formula_jacobi} proved a similar theorem for
  series in terms of Jacobi polynomials $P_n^{(\alpha_m,\beta_m)}$ with
  $\alpha_m\to\infty$ and $\frac{\beta_m}{\alpha_m}\to c$ for $m\to\infty$.
\end{remark}
In this section we will show the corresponding result for the projective spaces
$\mathbb{FP}^{d-1}$ and also extend it to singular functions.  It will be
convenient to use normalized Jacobi polynomials in this section
\begin{equation*}
  p_n^{(\alpha,\beta)}(t)=\frac{P_n^{(\alpha,\beta)}(t)}
    {P_n^{(\alpha,\beta)}(1)}.
\end{equation*}
We first state and prove a lemma that will be essential for the proof of this
 theorem.
\begin{lemma}\label{lem:uniform}
  For every $n\in\mathbb{N}_0$ the limit relation
  \begin{equation*}
    \lim_{\alpha\to\infty}p_n^{(\alpha,\beta)} (\cos2\theta) =\cos^{2n}\theta
  \end{equation*}
  holds uniformly for   $\theta\in[0,\frac\pi2]$.
\end{lemma}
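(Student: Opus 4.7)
The plan is to exploit the hypergeometric representation of the (normalized) Jacobi polynomials. Recall that
\begin{equation*}
  P_n^{(\alpha,\beta)}(x) = \binom{n+\alpha}{n}\,\Hypergeom21{-n,\,n+\alpha+\beta+1}{\alpha+1}{\tfrac{1-x}{2}},
\end{equation*}
and since $P_n^{(\alpha,\beta)}(1)=\binom{n+\alpha}{n}$, the normalization divides out the prefactor, giving
\begin{equation*}
  p_n^{(\alpha,\beta)}(x)=\Hypergeom21{-n,\,n+\alpha+\beta+1}{\alpha+1}{\tfrac{1-x}{2}}.
\end{equation*}
Specializing to $x=\cos 2\theta$ and using $\frac{1-\cos 2\theta}{2}=\sin^2\theta$, I would write
\begin{equation*}
  p_n^{(\alpha,\beta)}(\cos 2\theta)
  =\sum_{k=0}^{n}\frac{(-n)_k\,(n+\alpha+\beta+1)_k}{(\alpha+1)_k\,k!}\,\sin^{2k}\theta .
\end{equation*}

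This reduces the claim to a termwise computation, since the sum is finite (length $n+1$) and the coefficients do not depend on $\theta$. The key observation is that for each fixed $k\in\{0,\ldots,n\}$,
\begin{equation*}
  \frac{(n+\alpha+\beta+1)_k}{(\alpha+1)_k}
  =\prod_{j=1}^{k}\frac{\alpha+n+\beta+j}{\alpha+j}\;\longrightarrow\;1
  \quad\text{as }\alpha\to\infty,
\end{equation*}
because every factor in the product is a ratio of two affine functions of $\alpha$ with the same leading coefficient. Consequently, the $k$-th coefficient tends to $\frac{(-n)_k}{k!}=(-1)^k\binom{n}{k}$.

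Putting these together and invoking the binomial theorem,
\begin{equation*}
  \lim_{\alpha\to\infty}p_n^{(\alpha,\beta)}(\cos 2\theta)
  =\sum_{k=0}^{n}(-1)^k\binom{n}{k}\sin^{2k}\theta
  =(1-\sin^2\theta)^n=\cos^{2n}\theta.
\end{equation*}
Uniformity on $[0,\tfrac{\pi}{2}]$ is then immediate: the convergence is a finite sum of products of a $\theta$-independent scalar (which converges) times the function $\sin^{2k}\theta$ bounded by $1$. I do not anticipate any serious obstacle here; the only mild care needed is to ensure the hypergeometric identity is applied with the correct normalization constant $P_n^{(\alpha,\beta)}(1)$, which is why passing to $p_n^{(\alpha,\beta)}$ from the outset makes the argument clean.
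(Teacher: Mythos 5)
Your proof is correct and follows essentially the same route as the paper: both expand $p_n^{(\alpha,\beta)}(\cos 2\theta)$ as an explicit finite sum with $\alpha$-dependent coefficients and pass to the limit term by term, with uniformity coming for free from the finiteness of the sum and the boundedness of the $\theta$-dependent factors. The only cosmetic difference is the choice of expansion: the paper works with terms $\cos^{2m}\theta\,\sin^{2(n-m)}\theta$, in which every term except $m=n$ vanishes in the limit, whereas your hypergeometric series in $\sin^{2}\theta$ has each coefficient converging to $(-1)^k\binom{n}{k}$ and uses the binomial theorem at the end to reassemble $\cos^{2n}\theta$.
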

\begin{proof}[Proof of Lemma~\ref{lem:uniform}]
  We fix $\beta$ and first notice that
  \begin{multline*}
    p_n^{(\alpha,\beta)}(\cos2\theta)=\sum_{m=0}^n(-1)^{n-m}\binom nm
    \frac{(\beta+m+1)(\beta+m+2)\cdots(\beta+n)}
    {(\alpha+1)(\alpha+2)\cdots(\alpha+n-m)}\\\times
    \left(\frac{1+\cos2\theta}2\right)^m
    \left(\frac{1-\cos2\theta}2\right)^{n-m}.
  \end{multline*}
  For fixed $n$ every term tends to $0$ uniformly in $\theta$ for
  $\alpha\to\infty$, except for the term with $m=n$, which equals
  $(\frac{1+\cos2\theta}2)^n=\cos^{2n}\theta$. This shows uniform convergence
  for $\theta\in[0,\frac\pi2]$ for all $n$.

\end{proof}

\begin{lemma}\label{lem:pn-bound}
  For $\beta\geq0$ and $\beta=-\frac12$ the normalized Jacobi polynomials
  satisfy the following inequalities
  \begin{equation}
    \label{eq:pn-bound1}
    |p_n^{(\alpha,\beta)}(\cos2\theta)|
    \leq\frac{\Gamma(\alpha+1)}{\Gamma(\beta+1)
      (n\sin^2\theta_0)^{\alpha-\beta}}
  \end{equation}
  valid for 
  $\theta_0\leq\theta\leq\frac\pi2$ and
\begin{equation}
    \label{eq:pn-bound2}
    |p_n^{(\alpha,\beta)}(\cos2\theta)|\leq\frac{\Gamma(\alpha+1)}
    {\Gamma(\alpha-\beta)}
    \frac{(\cos\theta_0)^{2n}}{(\alpha-\beta-1-n\tan^2\theta_0)^{\beta+1}}
  \end{equation}
  valid for $\theta_0\leq\theta\leq\frac\pi2$ and
  $n\tan^2\theta_0<\alpha-\beta-1$.
\end{lemma}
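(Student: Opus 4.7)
I would prove the two bounds using two different explicit representations of the normalized Jacobi polynomial.

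For bound \eqref{eq:pn-bound2}, I would apply Pfaff's transformation to the standard hypergeometric identity $p_n^{(\alpha,\beta)}(\cos 2\theta)={}_2F_1(-n,n+\alpha+\beta+1;\alpha+1;\sin^2\theta)$, yielding the finite polynomial expansion
\begin{equation*}
p_n^{(\alpha,\beta)}(\cos 2\theta)=\cos^{2n}\theta\sum_{k=0}^n(-1)^k\binom{n}{k}\frac{\Gamma(n+\beta+1)/\Gamma(n+\beta-k+1)}{(\alpha+1)(\alpha+2)\cdots(\alpha+k)}\tan^{2k}\theta.
\end{equation*}
Taking absolute values termwise and using $\cos^{2n}\theta\le\cos^{2n}\theta_0$ for $\theta\ge\theta_0$ reduces the estimate to bounding the resulting finite sum. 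Under the hypothesis $n\tan^2\theta_0<\alpha-\beta-1$, sharp termwise estimates---writing $\binom{n}{k}\,\Gamma(n+\beta+1)/\Gamma(n+\beta-k+1)$ compactly and comparing the ratio of consecutive terms with those of an incomplete beta integral---should dominate the sum by a convergent series with closed form $\Gamma(\alpha+1)/[\Gamma(\alpha-\beta)(\alpha-\beta-1-n\tan^2\theta_0)^{\beta+1}]$.

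For bound \eqref{eq:pn-bound1}, I would invoke the Koornwinder-type fractional integral representation (valid for $\alpha>\beta>-1$)
\begin{equation*}
p_n^{(\alpha,\beta)}(\cos 2\theta)=\frac{\Gamma(\alpha+1)}{\Gamma(\beta+1)\Gamma(\alpha-\beta)(\cos\theta)^{2\alpha}}\int_{\sin^2\theta}^{1}p_n^{(\beta,\beta)}(1-2w)(1-w)^\beta(w-\sin^2\theta)^{\alpha-\beta-1}\,dw,
\end{equation*}
and apply Rodrigues' formula to express $p_n^{(\beta,\beta)}(1-2w)\,w^\beta(1-w)^\beta$ as a constant multiple of $(d/dw)^n[w^{n+\beta}(1-w)^{n+\beta}]$. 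Integrating by parts $n$ times, the boundary terms at $w=1$ vanish because of the factor $(1-w)^{n+\beta}$, and those at $w=\sin^2\theta$ vanish when $\alpha-\beta-1$ is large (and in general by analytic continuation in $\alpha$). After the integrations by parts the derivatives land on $w^{-\beta}(w-\sin^2\theta)^{\alpha-\beta-1}$, and the Leibniz rule together with the estimate $w\ge\sin^2\theta_0$ on the interval of integration produces the claimed $(n\sin^2\theta_0)^{\alpha-\beta}$ decay. The boundary case $\beta=-\tfrac12$ is handled by the same machinery, with minor adjustments since the corresponding ultraspherical polynomials are Chebyshev of the first kind.

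\textbf{Main obstacle.} The subtlest step is the termwise estimate for \eqref{eq:pn-bound2}: the naive bound $\binom{n}{k}\,\Gamma(n+\beta+1)/\Gamma(n+\beta-k+1)\le(n(n+\beta))^k/k!$ yields only the \emph{quadratic} condition $n^2\tan^2\theta_0<\alpha+1$, whereas the lemma states the sharper \emph{linear} condition $n\tan^2\theta_0<\alpha-\beta-1$. Arranging the bookkeeping to obtain the correct linear condition---e.g.\ by rewriting the product as $\binom{n+\beta}{k}\cdot n!/(n-k)!$ and comparing the ensuing sum against a beta-function integral that produces the factor $(\alpha-\beta-1-n\tan^2\theta_0)^{-\beta-1}$---is the main technical difficulty. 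For bound \eqref{eq:pn-bound1} the analogous challenge is the rigorous justification of the $n$-fold integration by parts when $\alpha-\beta$ is non-integer and the uniform control of the Leibniz expansion of $(d/dw)^n[w^{-\beta}(w-\sin^2\theta)^{\alpha-\beta-1}]$.
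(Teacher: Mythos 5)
Your plan rests on different representations from the paper's and, as written, it has genuine gaps at exactly the steps you flag as ``obstacles''. The paper proves \emph{both} bounds from a single tool: Koornwinder's Laplace-type double integral
\begin{equation*}
  p_n^{(\alpha,\beta)}(\cos2\theta)=
  \frac{2\Gamma(\alpha+1)}{\sqrt\pi\,\Gamma(\alpha-\beta)\Gamma(\beta+\tfrac12)}
  \int_0^1\!\!\int_0^\pi\bigl(\cos^2\theta-r^2\sin^2\theta+ir\cos\phi\sin2\theta\bigr)^n
  (1-r^2)^{\alpha-\beta-1}r^{2\beta+1}\sin^{2\beta}\!\phi\,d\phi\,dr,
\end{equation*}
together with the elementary modulus estimate
$\left|\cos^2\theta-r^2\sin^2\theta+ir\cos\phi\sin2\theta\right|\le 1-(1-r^2)\sin^2\theta$.
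Bound \eqref{eq:pn-bound1} follows from $1-(1-r^2)\sin^2\theta\le e^{-(1-r^2)\sin^2\theta_0}$, the substitution $u=1-r^2$, and extending the integral to $\infty$; bound \eqref{eq:pn-bound2} from
$1-(1-r^2)\sin^2\theta\le\cos^2\theta_0\,e^{r^2\tan^2\theta_0}$ combined with $(1-r^2)^{\alpha-\beta-1}\le e^{-r^2(\alpha-\beta-1)}$. The linear condition $n\tan^2\theta_0<\alpha-\beta-1$ drops out automatically because the two exponentials compete linearly in $r^2$. The case $\beta=-\tfrac12$ is the limiting single-integral form of the same representation.

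Concretely, the gaps in your version are these. For \eqref{eq:pn-bound2}, the termwise estimate of the Pfaff-transformed sum is precisely the point you leave open: the naive bound gives only the quadratic condition $n^2\tan^2\theta_0\lesssim\alpha$, and no mechanism is given that produces the linear one. There is also an unaddressed error earlier in that step: after taking absolute values you cannot evaluate $\cos^{2n}\theta\sum_k c_k\tan^{2k}\theta$ at $\theta=\theta_0$, since $\tan$ is increasing on $[\theta_0,\tfrac\pi2)$; the relevant quantity $\sup_{\theta\ge\theta_0}\cos^{2(n-k)}\theta\sin^{2k}\theta$ equals $((n-k)/n)^{n-k}(k/n)^k$ (attained at $\tan^2\theta=k/(n-k)$) rather than $\cos^{2(n-k)}\theta_0\sin^{2k}\theta_0$ whenever $\tan^2\theta_0<k/(n-k)$. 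For \eqref{eq:pn-bound1}, the $n$-fold integration by parts pushes derivatives onto $w^{-\beta}(w-\sin^2\theta)^{\alpha-\beta-1}$ and produces the factor $(w-\sin^2\theta)^{\alpha-\beta-1-n}$, which is not integrable at the lower endpoint once $n\ge\alpha-\beta$, and in the same regime the boundary terms at $w=\sin^2\theta$ do not vanish; ``analytic continuation in $\alpha$'' can extend an identity but not an inequality, so this route does not close. Nor is it clear how the decay $(n\sin^2\theta_0)^{-(\alpha-\beta)}$ would emerge from the Leibniz expansion. I recommend switching to the double-integral representation above, where both estimates are three lines each.
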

\begin{proof}[Proof of Lemma~\ref{lem:pn-bound}]
  For $\beta\geq0$ we use the formula (see
  \cite{Koornwinder1972:addition_formula_jacobi})
  \begin{equation}
    \label{eq:koornwinder}
    \begin{split}
      p_n^{(\alpha,\beta)}(\cos2\theta)&=
 \frac{2\Gamma(\alpha+1)}{\sqrt\pi\Gamma(\alpha-\beta)\Gamma(\beta+\frac12)}\\
      &\times\int\limits_0^1\int\limits_0^\pi
      \left(\cos^2\theta-r^2\sin^2\theta+ir\cos\phi\sin2\theta\right)^n\\
      &\times(1-r^2)^{\alpha-\beta-1}r^{2\beta+1}\sin^{2\beta}\phi\,d\phi\,dr.
    \end{split}
  \end{equation}
  Notice that
  \begin{align*}
    &\left|\cos^2\theta-r^2\sin^2\theta+ir\cos\phi\sin2\theta\right|^2\\
    =&
    (\cos^2\theta+r^2\sin^2\theta)^2-4r^2\sin^2\theta\cos^2\theta\sin^2\phi\\
    \leq&
    (\cos^2\theta+r^2\sin^2\theta)^2=(1-(1-r^2)\sin^2\theta)^2.
  \end{align*}
  For proving \eqref{eq:pn-bound1} we estimate
  \begin{align*}
    &|p_n^{(\alpha,\beta)}(\cos2\theta)|
    \leq \frac{2\Gamma(\alpha+1)}{\sqrt\pi\Gamma(\alpha-\beta)\Gamma(\beta+\frac12)}\\
    \times&\int\limits_0^1\int\limits_0^\pi
    e^{-n(1-r^2)\sin^2\theta_0}(1-r^2)^{\alpha-\beta-1}r^{2\beta+1}\sin^{2\beta}\phi\,d\phi\,dr\\
    \leq&\frac{2\Gamma(\alpha+1)}{\Gamma(\alpha-\beta)\Gamma(\beta+1)}
    \int_0^1e^{-n(1-r^2)\sin^2\theta_0}(1-r^2)^{\alpha-\beta-1}r\,dr
  \end{align*}
  In the last integral we substitute $1-r^2$ and then extend the resulting
  integral to $\infty$ to obtain \eqref{eq:pn-bound1}.

  For proving \eqref{eq:pn-bound2} we estimate
  \begin{align*}
    &|p_n^{(\alpha,\beta)}(\cos2\theta)|
    \leq (\cos\theta_0)^{2n}
    \frac{2\Gamma(\alpha+1)}{\sqrt\pi\Gamma(\alpha-\beta)\Gamma(\beta+\frac12)}\\
    \times&\int\limits_0^1\int\limits_0^\pi
    e^{-r^2(\alpha-\beta-1-n\tan^2\theta_0)}r^{2\beta+1}\sin^{2\beta}\phi\,d\phi\,dr\\
    \leq&\frac{2\Gamma(\alpha+1)}{\Gamma(\alpha-\beta)\Gamma(\beta+1)}
    \int_0^1e^{-r^2(\alpha-\beta-1-n\tan^2\theta_0)}r^{2\beta+1}\,dr.
  \end{align*}
  We extend the range of integration to $\infty$ to obtain \eqref{eq:pn-bound2}.

  For $\beta=-\frac12$ we use the representation
  \begin{multline*}
    p_n^{(\alpha,-\frac12)}(\cos2\theta)=
    \frac{\Gamma(\alpha+2)}{\sqrt\pi\Gamma(\alpha+\frac32)}\\
\times\int_0^1\Re\left(\cos^2\theta-x^2\sin^2\theta+
      2ix\cos\theta\sin\theta\right)^n(1-x^2)^{\alpha-\frac12}\,dx.
 \end{multline*}
 This can be obtained from \eqref{eq:koornwinder} by taking
 the limit $\beta\to-\frac12$.
\end{proof}

\begin{definition}
  The space of continuous functions with a \textup{(}moderate\textup{)}
  singularity at the right end point is denoted by
  \begin{equation*}
  \csing([-1,1])=\left\{f\in\cont([-1,1))\mid \exists s\in\mathbb{R}^+:
    (1-t)^sf(t)\in\cont([-1,1])\right\}.
\end{equation*}
\begin{remark}
  Notice that every $f\in\csing([-1,1])$ is contained in
  $L^1(\mu_{\alpha,\beta})$ for $\alpha$ large enough.
\end{remark}
\end{definition}
\begin{theorem}\label{thm:sing-schoen}
  A function $f\in\csing([-1,1])$ is positive definite for all spaces
  $\mathbb{FP}^{d-1}$ \textup{(}for
  $\mathbb{F}\in\{\mathbb{R},\mathbb{C},\mathbb{H}\}$\textup{)} with $d$ large
  enough to ensure integrability, if and only if there are non-negative $a_n$
  \textup{(}$n\in\mathbb{N}_0$\textup{)} such that
  \begin{equation}\label{eq:sing-schoen}
    f(\cos2\theta)=\sum_{n=0}^\infty a_n\cos^{2n}\theta.
  \end{equation}
\end{theorem}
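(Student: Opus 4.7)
The plan is to argue sufficiency and necessity separately. For sufficiency, the main point is that each $\cos^{2n}\theta$ is already positive definite on every $\mathbb{FP}^{d-1}$: using unit-vector representatives, $\cos^2\theta(x,y) = |\langle x,y\rangle_{\mathbb{F}}|^2 = \mathrm{tr}(\pi_x \pi_y)$, where $\pi_x$ is the rank-one orthogonal projection onto the $\mathbb{F}$-line through $x$. This Gram-type kernel is positive definite, and its $n$-th power is positive definite by the classical Schur lemma (all kernels involved here are continuous and bounded). Given $f(\cos 2\theta) = \sum_n a_n \cos^{2n}\theta$ with $a_n \geq 0$ and an admissible signed measure $\mu$, I would split $\mu = \mu^+ - \mu^-$ and apply Tonelli with the non-negative integrand to each of the four non-negative product measures $\mu^\pm \times \mu^\pm$ to conclude $E_f(\mu) = \sum_n a_n E_{\cos^{2n}\theta}(\mu) \geq 0$.

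For necessity, for each $d$ large enough that $f \in L^1(\mu_{\alpha,\beta})$ (equivalently each $\alpha=\alpha_d$ in the admissible sequence), Theorem~\ref{thm:posdef} provides the $L^2$-convergent expansion
\[
 f(\cos 2\theta) = \sum_{n=0}^\infty c_n^{(\alpha)} p_n^{(\alpha,\beta)}(\cos 2\theta), \qquad c_n^{(\alpha)} := \widehat{f}(n) P_n^{(\alpha,\beta)}(1) \geq 0.
\]
The plan is to extract subsequential limits $a_n = \lim_j c_n^{(\alpha_j)} \geq 0$ along $\alpha_j \to \infty$ and identify them with the coefficients in \eqref{eq:sing-schoen}. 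To obtain uniform-in-$\alpha$ control I would first regularize via the Poisson kernel and work with
\[
 \mathcal{P}_r(f)(\cos 2\theta) = \sum_n c_n^{(\alpha)} r^n p_n^{(\alpha,\beta)}(\cos 2\theta), \qquad 0 < r < 1,
\]
which converges absolutely and uniformly. Fixing some $\theta_0 \in (0, \pi/2)$, Lemma~\ref{lem:uniform} gives the pointwise limit $p_n^{(\alpha,\beta)}(\cos 2\theta_0) \to \cos^{2n}\theta_0 > 0$, while the bound \eqref{eq:pn-bound2} of Lemma~\ref{lem:pn-bound} yields $|p_n^{(\alpha,\beta)}(\cos 2\theta_0)| \leq C (\cos\theta_0)^{2n}$ once $n\tan^2\theta_0 < \alpha - \beta - 1$. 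Together these two ingredients constrain the small-$n$ terms from above and provide a summable majorant for the tail of the non-negative series, producing a uniform-in-$\alpha$ bound on each $c_n^{(\alpha)}$. A diagonal argument then extracts $\alpha_j \to \infty$ with $c_n^{(\alpha_j)} \to a_n \in [0,\infty)$.

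The last step is a double limit. For fixed $r<1$ and $\theta \in (0, \pi/2)$, I would let $\alpha_j \to \infty$ in the regularized series: Lemma~\ref{lem:uniform} gives termwise convergence and Lemma~\ref{lem:pn-bound} supplies the dominating estimate, so by dominated convergence the limit equals $\sum_n a_n r^n \cos^{2n}\theta$. Letting $r\to 1$, the left-hand side $\mathcal{P}_r(f)(\cos 2\theta)$ tends to $f(\cos 2\theta)$ uniformly on compact subsets of $[-1,1)$ by \eqref{eq:limit}, and the right-hand side tends to $\sum_n a_n \cos^{2n}\theta$ by monotone convergence (all $a_n \geq 0$). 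This delivers \eqref{eq:sing-schoen} on $(0, \pi/2]$, with behavior near the singular endpoint governed by the $\csing$ hypothesis. The principal technical obstacle is precisely this joint handling of the limits $\alpha_j \to \infty$ and $r \to 1$; Lemma~\ref{lem:pn-bound}, in combination with the moderate-singularity assumption $f \in \csing$, is what makes the uniform tail estimates strong enough to survive both passages.
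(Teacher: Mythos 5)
Your sufficiency argument is correct and takes a mildly different route from the paper's: the paper derives positive definiteness of $\cos^2\theta$ from its two-term Jacobi expansion and then invokes the singular Schur lemma (Theorem~\ref{thm:schur-sing}), whereas you use the projection embedding $\cos^2\theta(x,y)=\mathrm{tr}(\pi_x\pi_y)$ together with the classical Schur lemma for the (continuous, bounded) powers; both work, and your Tonelli decomposition of $\mu$ handles the summation correctly.

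The necessity half, however, has a genuine gap at its central step: the uniform-in-$\alpha$ bound on the coefficients $c_n^{(\alpha)}$. You propose to extract it by evaluating the regularized series at an interior point $\theta_0$, combining the positive limit $p_n^{(\alpha,\beta)}(\cos2\theta_0)\to\cos^{2n}\theta_0$ with the majorant of Lemma~\ref{lem:pn-bound}. But at a fixed $\theta_0\in(0,\tfrac{\pi}{2})$ and finite $\alpha$ the values $p_m^{(\alpha,\beta)}(\cos2\theta_0)$ are not all non-negative (Jacobi polynomials change sign on $(-1,1)$), so $\sum_m c_m^{(\alpha)}r^m p_m^{(\alpha,\beta)}(\cos2\theta_0)$ is not a series of non-negative terms and no individual term is dominated by the sum; to control the remaining terms via $\sum_m c_m^{(\alpha)}(\cos\theta_0)^{2m}$ you would already need the very bound on the $c_m^{(\alpha)}$ that you are trying to establish, which is circular. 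The paper avoids this by evaluating at the singular endpoint $t=1$, where $p_n^{(\alpha,\beta)}(1)=1$ for every $n$, so that $\mathcal{P}_r(f)(1)=\sum_n c_n^{(\alpha)}r^n$ is term-wise non-negative; the entire purpose of the $\csing$ hypothesis and of the hypergeometric computation in the proof is to show $\mathcal{P}_r(f)(1)\le M(1-r)^{-2s}$ uniformly in $\alpha\ge 2s$, whence $c_n^{(\alpha)}\le 2eMn^{2s}$ upon taking $r=1-\tfrac1n$. Without this polynomial bound your diagonal extraction and the subsequent exchange of $\lim_{\alpha_j\to\infty}$ with $\sum_n$ have no dominating function. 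A secondary issue: the estimate \eqref{eq:pn-bound2} is only available for $n\tan^2\theta_0<\alpha-\beta-1$, i.e.\ for $n$ at most of order $\alpha$, so the tail $n>C\alpha_k$ must be treated separately with \eqref{eq:pn-bound1}, as the paper does; citing \eqref{eq:pn-bound2} alone does not majorize the full tail.
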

\begin{proof}
  The sufficiency of the condition is obvious by the fact that
  \begin{equation*}
    \cos^2\theta=\frac1{\alpha+\beta+2}P_1^{(\alpha,\beta)}(\cos2\theta)+
    \frac{\beta+1}{\alpha+\beta+2}P_0^{(\alpha,\beta)}(\cos2\theta).
  \end{equation*}
  Then Schur's lemma shows that all powers $\cos^{2n}\theta$ are positive
  definite.

  The proof of the necessity will make use of the same smoothing procedure as
  the proofs of Theorems~\ref{thm:posdef} and~\ref{thm:schur-sing}, this time
  taking care of the dependence on $\alpha$. Throughout this proof we express
  the function $f$ by its normalized Fourier expansion
  \begin{equation*}
    f(t)=\sum_{n=0}^\infty a_n^{(\alpha,\beta)}p_n^{(\alpha,\beta)}(t).
  \end{equation*}

Let $f\in\csing([-1,1])$ . Then there exists $s\geq0$ such that
\begin{equation*}
  (1-t)^sf(t)=g(t)\in\mathrm{C}([-1,1]).
\end{equation*}
We can choose $s\in\mathbb{N}$ with this property. Assume that $|g(t)|\leq 1$
for all $t\in[-1,1]$, which we can do without loss of generality. Then we
have
\begin{multline*}
  |\mathcal{P}_r(f)(1)|\leq C_{\alpha,\beta}
  \int_0^{\frac\pi2}\mathcal{P}_r(\cos2\theta)(\sin\theta)^{2\alpha-2s+1}
  (\cos\theta)^{2\beta+1}\,d\theta\\=
  \frac{1-r}{(1+r)^{\alpha+\beta+2}}C_{\alpha,\beta}
  \int_0^{\frac\pi2}
  \Hypergeom21{\frac{\alpha+\beta+2}2,\frac{\alpha+\beta+3}2}{\beta+1}%
  {\frac{4r\cos^2\theta}{(1+r)^2}}(\sin\theta)^{2\alpha-2s+1}
  (\cos\theta)^{2\beta+1}\,d\theta\\=
  \frac{\Gamma(\alpha+\beta+2)\Gamma(\alpha-s+1)}
  {\Gamma(\alpha+1)\Gamma(\alpha+\beta-s+2)}
  \frac{1-r}{(1+r)^{\alpha+\beta+2}}
  \Hypergeom21{\frac{\alpha+\beta+2}2,\frac{\alpha+\beta+3}2}{\alpha+\beta-s+2}%
  {\frac{4r}{(1+r)^2}};
\end{multline*}
the last equation can be seen integrating term by term.

We now use two basic hypergeometric function identities (see
\cite[3.1.9]{Andrews_Askey_Roy1999:special_functions} and
\cite[p.~42]{Magnus_Oberhettinger_Soni1966:formulas_theorems_special}) to
obtain
\begin{align*}
  &\frac{1-r}{(1+r)^{\alpha+\beta+2}}
  \Hypergeom21{\frac{\alpha+\beta+2}2,\frac{\alpha+\beta+3}2}{\alpha+\beta-s+2}%
  {\frac{4r}{(1+r)^2}}\\=
  &(1-r)\Hypergeom21{\alpha+\beta+2,s+1}{\alpha+\beta+2-s}{r}\\=
  &(1-r)^{-2s}\Hypergeom21{-s,\alpha+\beta+1-2s}{\alpha+\beta+2-s}{r}.
\end{align*}
The last hypergeometric function is a polynomial, since $s$ was assumed to be
an integer. For $\alpha\geq s$ all coefficients of this polynomial are bounded
as functions of $\alpha$.

Summing up, we have obtained that
\begin{equation}\label{eq:Prf-upper}
  \mathcal{P}_r(f)(1)=
  \sum_{n=0}^\infty a_n^{(\alpha,\beta)}r^n
  \leq\frac M{(1-r)^{2s}}
\end{equation}
for $\alpha\geq2s$ and a constant $M$ only depending on $s$ and $\beta$.

We use now that the function $f$ is integrable and positive definite for all
$\alpha>\alpha_0$. This implies the non-negativity of all coefficients
$a_n^{(\alpha,\beta)}$ for all $\alpha>\alpha_0$. Inserting $r=1-\frac1n$ into
\eqref{eq:Prf-upper} gives the upper bound
\begin{equation}\label{eq:coeff-bound}
  a_n^{(\alpha,\beta)}\leq 2eMn^{2s}
\end{equation}
for all $\alpha>\alpha_0$. This shows that there is an increasing sequence
$(\alpha_k)_{k\in\mathbb{N}}$ tending to $\infty$, such that
\begin{equation*}
  \forall n\in\mathbb{N}_0: \lim_{k\to\infty}a_n^{(\alpha_k,\beta)}=b_n
\end{equation*}
exists. Notice that then $b_n\leq 2eM n^{2s}$ for all $n\geq1$.

It remains to show that
\begin{equation*}
  f(\cos2\theta)=\sum_{n=0}^\infty b_n(\cos\theta)^{2n}.
\end{equation*}
We fix $\theta_0$. For a given $\epsilon>0$ we choose $N$ so large that
\begin{equation*}
  \sum_{n=N+1}^\infty b_n(\cos\theta_0)^{2n}\leq
  2eM\sum_{n=N+1}^\infty n^{2s}(\cos\theta_0)^{2n}<\epsilon.
\end{equation*}
Then we choose $\alpha_0$ so large that for all $\alpha_k>\alpha_0$ we have
\begin{equation*}
  \left|\sum_{n=0}^Na_n^{(\alpha_k,\beta)}p_n^{(\alpha_k,\beta)}(\cos2\theta)-
  \sum_{n=0}^Nb_n(\cos\theta)^{2n}\right|<\epsilon
\end{equation*}
for $\theta\in[0,\frac\pi2]$.

Then we have
\begin{equation*}
  \left|f(\cos2\theta)-\sum_{n=0}^\infty b_n(\cos\theta)^{2n}\right|<
  2\epsilon+\sum_{n=N+1}^\infty a_n^{(\alpha_k,\beta)}
  |p_n^{(\alpha_k,\beta)}(\cos2\theta)|
\end{equation*}
for $\theta\in[\theta_0,\frac\pi2]$ and $\alpha_k>\alpha_0$. It remains to bound
this last sum. For this purpose we use the estimates \eqref{eq:pn-bound1} and
\eqref{eq:pn-bound2} depending on the summation index
\begin{align*}
  &\sum_{n=N+1}^\infty a_n^{(\alpha_k,\beta)}|p_n^{(\alpha_k,\beta)}(\cos2\theta)|\\
  \leq&\frac{2Me\Gamma(\alpha_k+1)}{\Gamma(\alpha_k-\beta)}
  \sum_{N<n<C\alpha_k}n^{2s}\frac{(\cos\theta_0)^{2n}}
  {(\alpha_k-\beta-1-n\tan^2\theta_0)^{\beta+1}}\\
  +&
  \frac{2Me\Gamma(\alpha_k+1)}{\Gamma(\beta+1)}
  \sum_{n>C\alpha_k}n^{2s-\alpha_k+\beta},
\end{align*}
where the constant $C$ satisfies $C\tan^2\theta_0<1$ and will be chosen later.
We estimate further by
\begin{align*}
  &\frac{2Me\Gamma(\alpha_k+1)}
  {\Gamma(\alpha_k-\beta)((1-C\tan^2\theta_0)\alpha_k-\beta-1)^{\beta+1}}
  \sum_{n=N+1}^\infty n^{2s}(\cos\theta_0)^{2n}\\
  +&
  \frac{2Me\Gamma(\alpha_k+1)}
  {(\alpha_k-\beta-2s)\Gamma(\beta+1)(\alpha_k-\beta-2s-1)}
  \left(C\alpha_k\right)^{2s-\alpha_k+\beta+1}.
\end{align*}
The first sum is bounded by $\epsilon$ times a function bounded in $\alpha_k$
(notice that the quotient of $\Gamma$-functions behaves like
$\alpha_k^{\beta+1}$, which cancels with the denominator). The second term can be
analyses using the Stirling approximation for the $\Gamma$-function. This gives
$(Ce\sin^2\theta_0)^{-\alpha_k}\alpha_k^{2s+\beta+\frac12}$. We choose $C$ so that
$Ce\sin^2\theta_0>1$ still satisfying $C\tan^2\theta_0<1$. Then the second sum
tends to $0$ for $\alpha_k\to\infty$.

Summing up, we have shown that
\begin{equation*}
  f(\cos2\theta)=\sum_{n=0}^\infty b_n(\cos\theta)^{2n}
\end{equation*}
uniformly on $[\theta_0,\frac\pi2]$.
\end{proof}
\begin{remark}
  The proof shows that actually the limit
  \begin{equation*}
    \lim_{\alpha\to\infty}a_n^{(\alpha,\beta)}
  \end{equation*}
  exists for all $n$, since the representation of $f$ as a power series in
  $\cos^2\theta$ is unique.
\end{remark}
\begin{remark}
  Theorem~\ref{thm:sing-schoen} contains the generalization of Schoenberg's
  Theorem~\ref{thm:schoenberg} to the projective spaces. Furthermore, the
  estimate \eqref{eq:pn-bound1} shows that for any $f\in\csing([-1,1])$ the
  Fourier series converges uniformly on $[\theta_0,\frac\pi2]$ for all $\alpha$
  large enough.
\end{remark}

\section{Riesz kernels for the geodesic and chordal
  distance}\label{sec:riesz-kern-geod}
The most prominent examples of singular kernels are the Riesz kernels
given by negative powers of a distance. These kernels are the subject of
investigation in the context of classical potential theory (see
\cite{Landkof1972:foundations_modern_potential}).

The spaces $X$ under consideration can be equipped with two ``natural''
metrics, which exhibit rather different behavior in this context: the geodesic
metric given by the angle $\theta(x,y)$ and the chordal metric
$\chi(x,y)=\sin(\kappa\theta(x,y))$. These metrics are, of course,
equivalent; the geodesic metric is the metric coming from the structure of $X$
as a Riemannian manifold, whereas the chordal metric comes from embedding the
space $X$ into a suitable Euclidean space (on the sphere, up to a constant factor, this is the Euclidean distance). 
It is this second property that
turns out to be crucial for positive definiteness of the associated Riesz
kernels. As we shall see below, (conditional) positive definiteness (and consequently energy minimization properties) behave very differently for these two metrics. 

Before further discussing these two special cases, we begin by discussing general Riesz-type kernels on metric spaces. 
In \cite{Schoenberg1938:metric_spaces_positive} I.~J.~Schoenberg investigated
the question, under which conditions a metric space $(X,d)$ can be isometrically embedded into
a Hilbert space. He found out that the correct condition is exactly  the conditional
positive definiteness of the function $-d(x,y)^2$. Furthermore, he found that a
function $K(x,y)$ is conditionally positive definite, if and only if
$e^{\lambda K(x,y)}$ is positive definite for all $\lambda>0$. Thus the metric
space $(X,d)$ can be embedded to Hilbert space if and only if the Gaussians $e^{-\lambda
  d(x,y)^2}$ are positive definite for all $\lambda>0$.

For $s \in \mathbb R$,  define the Riesz kernel on the metric space $(X,d)$ as $\operatorname{sgn} (s) d(x,y)^{-s}$ if $ s\neq 0$ and $-\log d (x,y)$ if $ s=0$. We show that conditional positive definiteness for one value of the parameter implies conditional positive definiteness of the Riesz kernel for a whole interval of parameters.

\begin{theorem}\label{thm:posdefinterval}
Let $(X,d)$ be a compact metric space of Hausdorff dimension $D>0$. Assume that for some $s_0 < 0$ the Riesz kernel is conditionally  positive definite, then the Riesz kernels are conditionally positive definite for all $s \in [s_0, D)$. 
\end{theorem}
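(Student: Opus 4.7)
The plan is to reduce everything to Schoenberg's classical equivalence between conditional negative definiteness of a kernel $K$ on a general set and positive definiteness of $e^{-\lambda K}$ for every $\lambda>0$ (applied here to the function $(x,y)\mapsto d(x,y)^{|s_0|}$), together with Bernstein--type subordination identities that let me re-express every other power and the logarithm as non-negative integrals of Gaussians. The hypothesis that $-d(x,y)^{-s_0}=-d(x,y)^{|s_0|}$ is conditionally positive definite is, by definition, the statement that $\psi(x,y):=d(x,y)^{|s_0|}$ is conditionally \emph{negative} definite, so Schoenberg gives that $e^{-\lambda\psi(x,y)}$ is positive definite for every $\lambda>0$.

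For $s\in(0,D)$, I would write, with $r=s/|s_0|>0$,
\begin{equation*}
  d(x,y)^{-s}=\bigl(\psi(x,y)\bigr)^{-r}=\frac{1}{\Gamma(r)}\int_0^\infty \lambda^{r-1}e^{-\lambda\psi(x,y)}\,d\lambda
\end{equation*}
and test against an arbitrary signed measure $\mu$ with $\mu(X)=0$ for which $d^{-s}\in L^1(|\mu|\otimes|\mu|)$. Applying Fubini (justified in the next paragraph) yields
\begin{equation*}
  \iint d(x,y)^{-s}\,d\mu(x)\,d\mu(y)=\frac{1}{\Gamma(r)}\int_0^\infty \lambda^{r-1}\!\iint e^{-\lambda\psi}\,d\mu\,d\mu\,d\lambda\geq0,
\end{equation*}
since each inner integral is $\geq 0$ by positive definiteness of $e^{-\lambda\psi}$.

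For $s\in[s_0,0)$, set $\alpha=|s|/|s_0|\in(0,1]$. The case $\alpha=1$ is the hypothesis. For $\alpha<1$ I would invoke the subordination identity
\begin{equation*}
  t^{\alpha}=\frac{\alpha}{\Gamma(1-\alpha)}\int_0^\infty \bigl(1-e^{-\lambda t}\bigr)\lambda^{-\alpha-1}\,d\lambda,\qquad t\geq0,
\end{equation*}
with $t=\psi(x,y)$, so that $d^{|s|}=\psi^{\alpha}$. Testing against a zero-mass $\mu$ and using $\iint 1\,d\mu\,d\mu=\mu(X)^2=0$ gives
\begin{equation*}
  \iint \bigl(-d(x,y)^{|s|}\bigr)d\mu\,d\mu=\frac{\alpha}{\Gamma(1-\alpha)}\int_0^\infty \lambda^{-\alpha-1}\!\iint e^{-\lambda\psi}\,d\mu\,d\mu\,d\lambda\geq0,
\end{equation*}
establishing conditional positive definiteness of $\operatorname{sgn}(s)d^{-s}=-d^{|s|}$. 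The case $s=0$ is handled identically via the Frullani representation $-\log u=\int_0^\infty\lambda^{-1}(e^{-\lambda u}-e^{-\lambda})\,d\lambda$ applied to $u=\psi$, noting $-\log d=|s_0|^{-1}(-\log\psi)$; the constant $e^{-\lambda}$ is annihilated by $\mu(X)=0$.

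The main obstacle is the rigorous Fubini justification, which is really the only technical point. At $\lambda\to\infty$ the Gaussian factor decays uniformly, so no issue; the sensitive region is $\lambda\to 0^+$, where the weight $\lambda^{r-1}$ (or $\lambda^{-\alpha-1}$) blows up. However, running the Bernstein/Frullani identity \emph{in reverse} against the positive measure $|\mu|\otimes|\mu|$ gives
\begin{equation*}
  \int_0^\infty\!\!\iint \lambda^{r-1}e^{-\lambda\psi(x,y)}\,d|\mu|(x)\,d|\mu|(y)\,d\lambda
  =\Gamma(r)\!\iint d(x,y)^{-s}\,d|\mu|(x)\,d|\mu|(y),
\end{equation*}
which is finite by the standing integrability assumption; the analogous calculation handles the $t^\alpha$ and Frullani cases after splitting the integrand into pieces of definite sign. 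This hypothesis is nontrivially satisfied because $D>s$ ensures a plentiful supply of test measures (e.g. those absolutely continuous with respect to any $D$-Frostman measure on $X$), which is where the Hausdorff dimension bound enters.
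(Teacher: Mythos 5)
Your argument is essentially the paper's own proof: both reduce to Schoenberg's equivalence (conditional positive definiteness of $-d^{|s_0|}$ iff $e^{-\lambda d^{|s_0|}}$ is positive definite for all $\lambda>0$) and then use the Bernstein subordination formulas $t^{-r}=\Gamma(r)^{-1}\int_0^\infty\lambda^{r-1}e^{-\lambda t}\,d\lambda$ for $0<s<D$ and $t^\alpha=c_\alpha\int_0^\infty(1-e^{-\lambda t})\lambda^{-\alpha-1}\,d\lambda$ for $s_0\le s<0$, with the constant term killed by $\mu(X)=0$. The only (harmless) deviations are that you handle $s=0$ via Frullani rather than the limit $-\log d=\lim_{s\to0+}(d^{-s}-1)/s$, and that you spell out the Tonelli/Fubini justification that the paper leaves implicit.
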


\begin{proof}
Let us assume that $s_0 <0$, i.e.   $-d (x,y)^{-s_0}$ is
  conditionally positive definite and hence  $e^{-\lambda d(x,y)^{-s_0}}$ is positive
  definite for all $\lambda>0$. From this we derive immediately that
  \begin{equation*}
    -d(x,y)^{-\gamma s_0}=
    \frac1{\Gamma(-\gamma)}\int_0^\infty \left(e^{-\lambda d(x,y)^{-s_0}}-1\right)
    \lambda^{-\gamma-1}\,d\lambda
  \end{equation*}
  is conditionally positive definite for $0<\gamma<1$. Indeed, for a Borel measure $\mu$ on $X$  of total mass zero, one immediately sees that $E_F (\mu) \ge 0$ by interchanging the order of integration. This proves the statement of the theorem for $s_0< s < 0$. 
  
 Furthermore, for  $s=0$, the logarithmic kernel
$-\log d(x,y)$ is also conditionally positive definite by
\begin{equation*}
  -\log d(x,y)=\lim_{s\to0+}\frac{d(x,y)^{-s}-1}s.
\end{equation*}

We can treat the case $s>0$ similarly. We find   
  that
  \begin{equation*}
    d(x,y)^{-s}=\frac1{\Gamma(-2s/s_0)}\int_0^\infty e^{-\lambda d(x,y)^{-s_0}}
    \lambda^{-\frac{s}{s_0} -1}\,d\lambda
  \end{equation*}
  is positive definite for $0<s<D$, where $D$ is the dimension of the
  space. This upper bound for $s$ comes from the potential theoretic fact that
  $X$ does not carry a non-zero measure $\mu$ with finite Riesz energy for
  $s\geq D$.
  \end{proof}

  \begin{remark}\label{rem:log}
  If $X$ is a compact two-point homogeneous manifold  and, e.g.,  $d = \theta$ is  the geodesic distance, then the statement of Theorem \ref{thm:posdefinterval} continues to hold for   $s_0=0$, i.e. positive definiteness for $s>0$ can be derived directly from the logarithmic case. Indeed,  assume that $- \log d(x,y)$ is conditionally positive definite and observe that  
  \begin{equation}
  d(x,y)^{-s} = e^{s \big(- \log d(x,y) + C \big)} e^{-Cs} ,
  \end{equation}
where the constant $C$ is chosen so that the kernel $- \log d(x,y) + C $ is positive definite. 
While we cannot apply the aforementioned result of Schoenberg directly (since the kernels are singular), one can still see that the kernel on the right hand side is positive definite by expanding the exponential in power series and observing that $(- \log d(x,y) + C )^n $ is integrable with respect to  the uniform measure for all $n\ge 1$ and therefore positive definite  by the singular version of Schur's lemma (Theorem \ref{thm:schur-sing}). 
\end{remark}

\begin{remark}\label{rem:conj} We conjecture that Theorem \ref{thm:posdefinterval} should hold for any $s_0< D$ on arbitrary compact metric spaces, but we haven't been able to treat the case $s_0>0$. 
\end{remark}

We now return to the setting of  two-point homogeneous  compact manifolds and first discuss the Riesz kernels with respect to the chordal distance $\chi(x,y)$. It is well known that these spaces isometrically embed into Euclidean spaces, and therefore  $-\chi(x,y)^2$ is
  conditionally positive definite, i.e. the condition of the Theorem \ref{thm:posdefinterval} hold with $s_0 = -2$. Therefore we obtain the following corollary:
  
\begin{corollary}\label{cor:chordal}
  Let $X$ be a compact two-point homogeneous manifold with the chordal distance
  $\chi(x,y)$. Then the associated Riesz kernels are conditionally positive
  definite for all $s$ with $-2\le s <D$, where $D$ is the dimension of $X$ as
  a real manifold.
\end{corollary}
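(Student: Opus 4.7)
The proof is essentially a direct application of Theorem \ref{thm:posdefinterval} once the value $s_0 = -2$ is validated. So my plan has essentially two steps: first, I would verify that $-\chi(x,y)^2$ is conditionally positive definite on every compact two-point homogeneous manifold; second, I would invoke Theorem \ref{thm:posdefinterval}.

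For the first step, the plan is to appeal to the classical result of Schoenberg cited in the preamble to Theorem \ref{thm:posdefinterval}: a metric space $(X,d)$ embeds isometrically into a Hilbert space if and only if $-d(x,y)^2$ is conditionally positive definite. Thus it suffices to exhibit an isometric embedding of $(X,\chi)$ into Euclidean space. For the sphere $\mathbb{S}^{d-1}$ this is immediate, since (up to a multiplicative constant) $\chi$ is the restriction of the ambient Euclidean distance. For the projective spaces $\mathbb{FP}^{d-1}$ one uses the standard realization of points as rank-one orthogonal projectors in the space of self-adjoint matrices over $\mathbb{F}$ equipped with the Frobenius inner product; a routine computation shows that under this embedding the Euclidean distance between two such projectors is a constant multiple of $\chi(x,y) = \sin(\kappa\,\theta(x,y))$. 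Consequently $-\chi(x,y)^2$ is conditionally positive definite on each such $X$.

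For the second step, I would apply Theorem \ref{thm:posdefinterval} with $s_0 = -2 < 0$. The theorem immediately yields that the Riesz kernels $\operatorname{sgn}(s)\,\chi(x,y)^{-s}$ are conditionally positive definite for every $s \in [-2, D)$, where $D$ is the real dimension of $X$. This is precisely the statement of the corollary.

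The main (and only nontrivial) obstacle is really the isometric embedding claim for the projective spaces, but this is classical and well documented in the literature on two-point homogeneous spaces, so in the write-up I would simply cite it rather than carry out the computation. Everything else is a mechanical specialization of Theorem \ref{thm:posdefinterval}.
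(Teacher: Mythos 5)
Your proposal is correct and follows exactly the paper's route: establish that $-\chi(x,y)^2$ is conditionally positive definite via the classical isometric embedding of these spaces into Euclidean space (Schoenberg's criterion), and then apply Theorem~\ref{thm:posdefinterval} with $s_0=-2$. The paper simply cites the embedding as well known rather than sketching the rank-one projector realization, but the argument is the same.
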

  
While this might be the most streamlined proof, this statement can be found in
the literature. In the case of the sphere it is classical
\cite{Bjoerck1956:distributions_positive_mass,
  Borodachov_Hardin_Saff2019:discrete_energy_rectifiable}, while the case of
projective spaces has been investigated in
\cite{Bilyk_Matzke_Nathe2024:geodesic_distance_riesz,
  Anderson_Dostert_Grabner+2023:riesz_green_energy}. It is also known that the
range is sharp: these kernels are not positive definite for $s<-2$.

The case of the geodesic distance is much more complicated due to the following
geometric reason. Since the spaces $X$ all have closed geodesics, there cannot
exist an embedding of the metric space $(X,\theta)$ into a Hilbert space. Thus
$-\theta(x,y)^2$ cannot be conditionally positive definite. Taking four equally
spaced points on a closed geodesic even shows that $-\theta(x,y)^\gamma$ is not
conditionally positive definite for $1<\gamma\leq2$. Beyond this range, the
behavior becomes very different for the sphere and projective spaces.

On the sphere $\mathbb{S}^{d-1}$ the kernel $-\theta(x,y)$ is conditionally
positive definite for any dimension $d\ge 2$ (see
\cite{Gangolli1967:positive_definite_kernels, Levy1965:processus_stochastiques,
  Skriganov2019:point_distributions_two-point}). This can be immediately
concluded from the easy direction of Theorem \ref{thm:schoenberg}, see
\cite{Schoenberg1942:positive_definite_functions}, since $\frac\pi2-\arccos(t)$
has only non-negative power series coefficients. Since $t$ is positive
definite, so are all of its powers are by Schur's lemma, and thus so is
$\frac\pi2-\arccos(t)$.

Therefore, applying Theorem \ref{thm:posdefinterval}, we recover the following
result \cite{Bilyk_Dai2019:geodesic_distance_riesz}:
  \begin{corollary}\label{cor:geodsphere}
  Let $X = \mathbb S^{d-1}$ be the sphere with the  metric  $\theta (x,y)$. Then the associated Riesz kernels are conditionally positive definite for all $s$ with $-1\le s <d-1$. 
\end{corollary}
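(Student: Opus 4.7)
The plan is to obtain Corollary \ref{cor:geodsphere} as an immediate application of Theorem \ref{thm:posdefinterval} with the single input value $s_0 = -1$. For this I need to verify that the Riesz kernel of the geodesic distance at $s_0 = -1$, namely $\operatorname{sgn}(-1) \, \theta(x,y)^{1} = -\theta(x,y)$, is conditionally positive definite on $\mathbb{S}^{d-1}$. Once this is established, Theorem \ref{thm:posdefinterval} applied with $s_0 = -1$ and $D = d-1$ (the Hausdorff dimension of $\mathbb{S}^{d-1}$) yields conditional positive definiteness of the Riesz kernel for every $s \in [-1, d-1)$, which is exactly the claimed range.

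To verify the premise, first I would recall that on the sphere the coordinate function $t = \cos\theta(x,y) = \langle x, y \rangle$ is positive definite: the Gram matrix $(\langle x_i, x_j \rangle)_{i,j}$ is a Gram matrix and hence positive semidefinite. By Schur's lemma (classical version on continuous kernels suffices here), every power $t^n$ is positive definite as well. Next I would invoke the well-known expansion
\begin{equation*}
\tfrac{\pi}{2} - \arccos(t) = \arcsin(t) = \sum_{n=0}^{\infty} \frac{(2n)!}{4^n (n!)^2 (2n+1)} t^{2n+1},
\end{equation*}
which has manifestly non-negative coefficients and converges uniformly on $[-1,1]$. Combining this with non-negativity of the coefficients and term-by-term positive definiteness, the continuous kernel $\frac{\pi}{2} - \theta(x,y)$ is positive definite on $\mathbb{S}^{d-1}$. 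Subtracting the constant $\frac{\pi}{2}$ (which vanishes against zero-mass measures) shows that $-\theta(x,y)$ is conditionally positive definite.

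Having checked the hypothesis, the rest is mechanical: Theorem \ref{thm:posdefinterval} gives the conclusion on the full interval $[-1, d-1)$. The only mild subtlety I foresee is matching the conventions: the Riesz kernel at a negative parameter $s_0$ is defined in the paper as $\operatorname{sgn}(s_0) \, d(x,y)^{-s_0}$, which for $s_0 = -1$ is precisely $-\theta(x,y)$, so the indexing is consistent. No additional computation is required, and there is no main obstacle beyond correctly identifying the input to Theorem \ref{thm:posdefinterval}; the upper endpoint $d-1$ is built into the theorem as the dimension bound coming from the non-existence of finite energy measures for $s \ge D$.
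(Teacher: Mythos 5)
Your proposal is correct and follows essentially the same route as the paper: the paper likewise verifies the hypothesis at $s_0=-1$ by noting that $\frac{\pi}{2}-\arccos(t)=\arcsin(t)$ has non-negative Taylor coefficients, that $t=\langle x,y\rangle$ and hence all its powers are positive definite by Schur's lemma, and then invokes Theorem~\ref{thm:posdefinterval} to cover the full range $[-1,d-1)$. The only difference is that you write out the $\arcsin$ series explicitly, which the paper leaves as a citation.
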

  
In the case of projective spaces $\mathbb{FP}^{d-1}$ the situation is
drastically different (unless, of course, $d=2$, in which case these spaces are
isometric to the sphere
$\mathbb S^{\operatorname{dim}_{\mathbb R} \mathbb F}$). For $d\ge 2$, the
geodesic Riesz kernel with $s=-1$ is not conditionally positive definite, see
\cite{Gangolli1967:positive_definite_kernels} or \cite[Lemma
5.2]{Bilyk_Matzke_Nathe2024:geodesic_distance_riesz}. Energy minimization of
the geodesic Riesz kernel for $s=-1$ can be viewed as the continuous version of
the Fejes T\'oth conjecture on the sum of line angles in discrete geometry,
which has attracted considerable interest in the recent years, thus also
bringing attention to the geodesic distance Riesz energy on projective spaces
for all values of $s$ \cite{Lim_McCann2022:maximizing_expected_powers,
  Bilyk_Matzke_Nathe2024:geodesic_distance_riesz}.

We observe that a dimension independent statement akin to Corollary
\ref{cor:geodsphere} cannot hold on projective spaces.
Theorem~\ref{thm:sing-schoen} immediately implies that the Riesz kernel
$\theta(x,y)^{-s}$ cannot be conditionally positive definite for all
$\mathbb{FP}^{d-1}$ for any fixed $s>0$, because it cannot be expressed in the
form \eqref{eq:sing-schoen}. Indeed, functions of this form extend
differentiably to a function even around $\frac\pi2$, whereas $\theta^{-s}$
does not. (Theorem~\ref{thm:sing-schoen} treats positive definiteness, but the
case of conditional positive definiteness is immediate: we just need to require
that the coefficients $a_n$ are non-negative for $n\ge1$, i.e. $a_0$ could have
any sign).

Let $s_d(\mathbb{F})$ be the maximal number such that  for all
  $s<s_d(\mathbb{F})$
  the geodesic  Riesz kernel $\operatorname{sgn}(s) \theta^{-s}$ is not conditionally positive definite on $\mathbb{FP}^{d-1}$. 
Since (conditional) positive definiteness on ${\mathbb{FP}}^{d-1}$ obviously implies the same property in all projective spaces over $\mathbb F$ with smaller dimension, Theorem~\ref{thm:sing-schoen} and the discussion above shows that 
  \begin{equation*}
    \lim_{d\to\infty}s_d(\mathbb{F})=\infty.
  \end{equation*}
  Moreover, if $s_d(\mathbb{F}) \le  0$, then  the Riesz kernel is conditionally positive definite for all $s\in  \big( s_d(\mathbb{F}), d-1\big) $, according to  Theorem \ref{thm:posdefinterval} and the discussion thereafter. In addition, 
    if the conjecture in Remark \ref{rem:conj} is true, Riesz kernels should be conditionally positive definite for all $s>  s_d(\mathbb{F})$ independently of the sign of $s_d(\mathbb{F})$.

  The exact value of the critical exponent $s_d(\mathbb{F})$ which indicates the phase transition of  positive definiteness of the geodesic Riesz kernel  on the projective spaces remains absolutely mysterious. It follows from \cite{Bilyk_Matzke_Nathe2024:geodesic_distance_riesz} that $ s_d(\mathbb{R}) \le d-3$, $ s_d(\mathbb{C}) \le 2d-5$, $ s_d(\mathbb{H}) \le 4d-9$, $ s_3 (\mathbb{O}) \le 7$, since it has been shown that 
 $\theta^{-s}$ is positive definite for $\mathbb{FP}^{d-1}$ for $s $ above those values. 
  However, numerical experiments indicate that positive definiteness holds
  for a larger interval for $s$. In particular, numerical computations suggest that $ s_3(\mathbb{R}) \le -0.59$,  $s_4(\mathbb{R}) \le -0.125$, while it appears that $ s_d(\mathbb{R}) >0 $ for $d\ge 5$. Hence, whether the phase transitions happens in the negative or positive range of exponents seems to depend on the dimension. 
  
  In an attempt to gain a better insight into this phenomenon, we explored  the positive  definiteness of the logarithmic geodesic Riesz kernel $- \log  \theta (x,y)$ on $\mathbb{FP}^{d-1}$ corresponding to $s=0$ since it provides indication as to whether $s_d (\mathbb F)$ is positive or negative. We numerically computed the appropriate Jacobi coefficients $c_n$  of this kernel and  check their positivity. The results are presented in Table \ref{fig:1} and they seem to indicate that the critical value $s_d (\mathbb F)$ is positive in all cases other than $\mathbb{RP}^2$ and $\mathbb{RP}^3$.

\begin{table}
  \begin{tabular}[h]{|l|p{6cm}|}
    \hline
    space&definiteness\\\hline
    $\mathbb{RP}^2$&positive definite by
    \cite{Bilyk_Matzke_Nathe2024:geodesic_distance_riesz}\\\hline
    $\mathbb{RP}^3$&open, numerical computations suggest positive definiteness\\\hline
    $\mathbb{RP}^4$&not definite, numerical computations show that $c_8<0$\\\hline
    $\mathbb{CP}^2$&open, numerical computations suggest positive definiteness\\\hline
    $\mathbb{CP}^3$&not definite, numerical computations show that $c_6<0$\\\hline
    $\mathbb{HP}^2$&not definite, numerical computations show that $c_{10}<0$\\\hline
    $\mathbb{OP}^2$&not definite, numerical computations show that $c_8<0$\\\hline
  \end{tabular}
  \caption{Definiteness of the kernel $-\log\theta(x,y)$}\label{fig:1}
\end{table}


\bibliographystyle{amsplain}
\bibliography{refs}
\end{document}